\newtheorem{theorem}[subsection]{Theorem}
\newtheorem{lemma}[subsection]{Lemma}
\newtheorem{corollary}[subsection]{Corollary}
\newtheorem{remark}[subsection]{Remark}
\newtheorem{example}[subsection]{Example}
\newtheorem{definition}[subsection]{Definition}
\newtheorem{claim}[subsection]{Claim}
\newcommand\CCC{{\mathbb C}}
\newcommand\NNN{{\mathbb N}}
\newcommand\RRR{{\mathbb R}}
\newcommand\ZZZ{{\mathbb Z}}
\newcommand\id{\mathrm{id}}
\newcommand\im{\mathrm{im}}
\newcommand\Int{\mathrm{Int}}
\newcommand\GL{\mathrm{GL}}
\newcommand\rank{rank} 
\newcommand\Stab{\mathcal{S}}
\newcommand\Diff{\mathcal{D}}
\newcommand\End{\mathcal{E}}
\newcommand\man{M}
\newcommand\nan{N}
\newcommand\van{V}
\newcommand\partit{\mathcal{P}}
\newcommand\sing{\Sigma}
\newcommand\struc{\Theta}
\newcommand\amap{f}
\newcommand\afunc{\alpha}
\newcommand\Fld{F}
\newcommand\hFld{G}
\newcommand\flow{\Phi}
\newcommand\Fol{\mathcal{F}}
\newcommand\partitf{\partit_{\amap}}
\newcommand\singf{\sing_{\amap}}
\newcommand\strucf{\struc_{\amap}}
\newcommand\partitFld{\partit_{\Fld}}
\newcommand\singFld{\sing_{\Fld}}
\newcommand\strucFld{\struc_{\Fld}}
\newcommand\partithFld{\partit_{\hFld}}
\newcommand\singhFld{\sing_{\hFld}}
\newcommand\struchFld{\struc_{\hFld}}
\newcommand\partitFol{\partit_{\Fol}}
\newcommand\singFol{\sing_{\Fol}}
\newcommand\strucFol{\struc_{\Fol}}
\newcommand\EV{\End(\struc,\van)}
\newcommand\EVFld{\End(\strucFld,\van)}
\newcommand\DV{\Diff(\struc,\van)}
\newcommand\DVFld{\Diff(\strucFld,\van)}
\newcommand\Estr{\End(\struc)}
\newcommand\Dstr{\Diff(\struc)}
\newcommand\Dstrf{\Diff(\strucf)}
\newcommand\EhF{\End(\struchFld)}
\newcommand\DhF{\Diff(\struchFld)}
\newcommand\DF{\Diff(\strucFld)}
\newcommand\EidV[1]{\End_{\id}(\struc,\van)^{#1}}
\newcommand\DidV[1]{\Diff_{\id}(\struc,\van)^{#1}}
\newcommand\EidVFld[1]{\End_{\id}(\strucFld,\van)^{#1}}
\newcommand\DidVFld[1]{\Diff_{\id}(\strucFld,\van)^{#1}}
\newcommand\EidhFld[1]{\End_{\id}(\struchFld)^{#1}}
\newcommand\DidhFld[1]{\Diff_{\id}(\struchFld)^{#1}}
\newcommand\Didf[1]{\Diff_{\id}(\strucf)^{#1}}
\newcommand\NNi{\overline{\NNN}_{0}}
\newcommand\NNz{\NNN_{0}}
\newcommand\Shift{\varphi}
\newcommand\domflow{\mathrm{dom}(\flow)}
\newcommand\funcflow{\mathrm{func}(\flow,\van)}
\newcommand\con{r}
\newcommand\bcon{\widetilde{\con}}
\newcommand\homot{H}
\newcommand\dif{h}
\newcommand\tdif{\widetilde{\dif}}
\newcommand\bhomot{\widetilde{\homot}}
\newcommand\imcon{R}
\newcommand\Stabf{\Stab(\amap)}
\newcommand\Stabfpl{\Stab^{+}(\amap)}
\newcommand\StabIdf[1]{\Stab_{\id}(\amap)^{#1}}
\newcommand\condif{\eta}
\newcommand\rval{\varepsilon}
\newcommand\LStab{\mathcal{LS}}
\newcommand\LStabf{\LStab(\amap)}
\newcommand\GLPR{\GL^{+}(2,\RRR)}
\newcommand\Sym[1]{{\mathrm{Sym}(#1)}}
\begin{document}
\begin{abstract}
Let $f:\mathbb{R}^2\to\mathbb{R}$ be a homogeneous polynomial and $\mathcal{S}(f)$ be the group of diffeomorphisms $h$ of $\mathbb{R}^2$ preserving $f$, i.e. $f \circ h =f$.
Denote by $\mathcal{S}_{\mathrm{id}}(f)^{r}$, $(0\leq r \leq \infty)$, the identity component of $\mathcal{S}(f)$ with respect to the weak Whitney $C^{r}_{W}$-topology.
We prove that $\mathcal{S}_{\mathrm{id}}(f)^{\infty} = \cdots = \mathcal{S}_{\mathrm{id}}(f)^{1}$ for all $f$ and that
$\mathcal{S}_{\mathrm{id}}(f)^{1} \not= \mathcal{S}_{\mathrm{id}}(f)^{0}$ if and only if $f$ is a product of at least two distinct irreducible over $\mathbb{R}$ quadratic forms.
\end{abstract}
\author{Sergiy Maksymenko}
\title
[Partition preserving diffeomorphisms]
{Connected components of partition preserving diffeomorphisms}
\address{Topology dept., Institute of Mathematics of NAS of Ukraine, Te\-re\-shchen\-kivs'ka st. 3, Kyiv, 01601 Ukraine}
\email{maks@imath.kiev.ua}
\date{30.11.2008}
\thanks{This research is partially supported by Grant of President of Ukraine.}
\maketitle
\section{Introduction}
Let $\amap:\RRR^2\to\RRR$ be a homogeneous polynomial of degree $p\geq 1$.
Thus up to a sign we can write
\begin{equation}\label{equ:homog_poly}
\amap(x,y) = \pm \prod_{i=1}^{l} L_i^{\alpha_i}(x,y) \cdot  \prod_{j=1}^{k} Q_j^{\beta_j}(x,y),
\end{equation}
where every $L_i$ is a linear function, $Q_j$ is a positive definite quadratic form, $\alpha_i, \beta_j \geq 1$, and
$$
\frac{L_i}{L_{i'}} \not= \mathrm{const} \ \text{for $i\not=i'$},
\qquad
\frac{Q_j}{Q_{j'}} \not= \mathrm{const} \ \text{for $j\not=j'$}.
$$

Denote by $\Stabf=\{\dif\in\Diff(\RRR^2) \, : \, \amap\circ \dif = \amap \}$ the stabilizer of $\amap$ with respect to the right action of the group $\Diff(\RRR^2)$ of $C^{\infty}$-diffeomorphisms of $\RRR^2$ on the space $C^{\infty}(\RRR^2,\RRR)$.
It consists of diffeomorphisms of $\RRR^2$ preserving every level-set $\amap^{-1}(c)$ of $\amap$, $(c\in \RRR)$.

Let $\StabIdf{r}$, $(0\leq r \leq \infty)$ be the identity component of $\Stabf$ with respect to weak Whitney $C^{r}_{W}$-topology.
Thus $\StabIdf{r}$ consists of diffeomorphisms $\dif\in\Stabf$ isotopic in $\Stabf$ to $\id_{\RRR^2}$ via (an $\amap$-preserving isotopy) $\homot:\RRR^2\times I\to\RRR^2$ whose partial derivatives in $(x,y)\in\RRR^2$ up to order $r$ continuously depend on $(x,y,t)$, see Section~\ref{sect:Whitney_topologies} for a precise definition.
Then it is easy to see that 
$$ \StabIdf{\infty} \subset \cdots \StabIdf{r} \subset \cdots \subset \StabIdf{1} \subset\StabIdf{0}.$$

It follows from results~\cite{Maks:jets,Maks:hamv2} that $\StabIdf{\infty}=\StabIdf{1}$.
Moreover, it is actually proved in~\cite{Maks:Shifts} that $\StabIdf{\infty}=\StabIdf{0}$ for $p\leq2$, see also~\cite{Maks:AGAG:2006}.
The aim of this note is to prove the following theorem describing the relation between $\StabIdf{r}$ for all $p\geq 1$.
\begin{theorem}\label{th:Stabf1_not_Stabf0}
Let $\amap:\RRR^2\to\RRR$ be a homogeneous polynomial of degree $p\geq 1$.
Then $\StabIdf{\infty}=\cdots=\StabIdf{1}$.
Moreover, $\StabIdf{1}\not=\StabIdf{0}$ if and only if $\amap$ is a product of at least two distinct definite quadratic forms, i.e. $\amap = Q^{\beta_1}_1\cdots Q^{\beta_k}_k$ for $k\geq2$.
\end{theorem}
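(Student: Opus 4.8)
The first chain of equalities $\StabIdf{\infty}=\cdots=\StabIdf{1}$ is exactly the cited result of \cite{Maks:jets,Maks:hamv2}, so the whole content lies in comparing $\StabIdf{1}$ with $\StabIdf{0}$. My plan is to detect the difference by a single invariant sitting at the origin. Write $\Gamma(\amap)=\{A\in\GL(2,\RRR):\amap\circ A=\amap\}$ for the group of linear symmetries of $\amap$ and let $\Gamma_{0}$ be its identity component. The origin is the unique most-degenerate point of the level-set partition, hence is fixed by every $\dif\in\Stabf$; since $\amap$ is homogeneous, comparing the degree-$p$ homogeneous parts of $\amap\circ\dif=\amap$ near $0$ gives $\amap\circ D\dif(0)=\amap$, i.e. $D\dif(0)\in\Gamma(\amap)$ for every $C^{1}$ element. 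The first step is then the observation that a $C^{1}_{W}$-isotopy $\homot_{t}$ from $\id$ to $\dif$ produces a \emph{continuous} path $t\mapsto D\homot_{t}(0)$ in $\Gamma(\amap)$, whence $\StabIdf{1}\subset\{\dif:\ D\dif(0)\in\Gamma_{0}\}$; in the $C^{0}_{W}$-topology no such constraint is available.

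The lemma I would isolate is the converse localization: if $\dif\in\StabIdf{0}$ and $D\dif(0)\in\Gamma_{0}$, then $\dif\in\StabIdf{1}$. The mechanism is to compose $\dif$ with a smooth path in $\Gamma_{0}\subset\Stabf$ to reduce to $D\dif(0)=\id$; away from the singular set the level curves of $\amap$ are regular, so any $C^{0}$-isotopy can be straightened and reparametrised to a $C^{\infty}$ one by a flow-box argument, while the vanishing of the $1$-jet at the origin guarantees that the spatial derivatives of the resulting isotopy extend continuously across $0$; finally the cited $\StabIdf{\infty}=\StabIdf{1}$ upgrades $C^{1}$ to $C^{\infty}$. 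This lemma --- precisely the assertion that the $1$-jet at the origin is the \emph{only} obstruction to passing from $C^{0}$ to $C^{1}$ --- is the main obstacle, and it is where the jet machinery of \cite{Maks:jets} must be used in earnest.

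Granting the lemma, the theorem reduces to computing the image of $\dif\mapsto[D\dif(0)]\in\Gamma(\amap)/\Gamma_{0}$ on $\StabIdf{0}$. In the exceptional case $\amap=Q_{1}^{\beta_{1}}\cdots Q_{k}^{\beta_{k}}$ with $k\geq2$, the distinct positive definite $Q_{j}$ force $\Gamma(\amap)$ to be finite: a one-parameter subgroup through $\id$ would fix each factor $Q_{j}$ up to a scalar and so lie in $\bigcap_{j}O(Q_{j})$, finite for two distinct ellipses; thus $\Gamma_{0}=\{\id\}$, whereas $-\id\in\Gamma(\amap)$ since $p=2\sum\beta_{j}$ is even. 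It therefore suffices to build a $C^{0}_{W}$-isotopy in $\Stabf$ from $\id$ to $-\id$. Here $\amap>0$ off $0$, every level set $\amap^{-1}(c)$ is a smooth circle shrinking to $0$, the field $\sgrad\amap$ vanishes only at $0$, its flow $\flow$ is complete, and from the scaling relation $\flow_{s}(\lambda z)=\lambda\,\flow_{\lambda^{p-2}s}(z)$ the period satisfies $T(c)=T(1)\,c^{(2-p)/p}\to\infty$ as $c\to0$. Since $\det(-\id)=1$, $-\id$ commutes with $\flow$ and restricts on each circle to the half-period map, so $-z=\flow(z,T(\amap(z))/2)$. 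I then set
\[
\homot_{s}(z)=\flow\!\left(z,\ \tfrac12\,\chi_{s}(\amap(z))\,T(\amap(z))\right),
\]
with $\chi_{s}$ smooth, equal to $1$ on $\{c\geq\delta(s)\}$ and to $0$ near $0$, where $\delta(s)\to0$ as $s\to1$ and $\chi_{0}\equiv0$. Each $\homot_{s}$ is a smooth $\amap$-preserving diffeomorphism (it is the identity near $0$ for $s<1$, and a smooth shift along $\flow$ away from $0$), the family is jointly continuous with $\homot_{0}=\id$ and $\homot_{1}=-\id$, so $-\id\in\StabIdf{0}$; but $D\homot_{s}(0)=\id$ for $s<1$ while $D\homot_{1}(0)=-\id$, so the first derivatives fail to converge and $-\id\notin\StabIdf{1}$ by the first step, giving $\StabIdf{1}\neq\StabIdf{0}$.

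Conversely, if $\amap$ is not of this form I must show the image is trivial, so that the lemma yields $\StabIdf{1}=\StabIdf{0}$. If $\amap=Q^{\beta}$ has a single quadratic factor, a linear change of coordinates makes $\amap=(x^{2}+y^{2})^{\beta}$ with $\Gamma(\amap)=O(2)$ and $\Gamma_{0}=SO(2)$; any $\dif\in\StabIdf{0}$ is orientation preserving, so $D\dif(0)\in SO(2)=\Gamma_{0}$. If $\amap$ has a linear factor, then $\amap^{-1}(0)$ contains lines through $0$ and its regular level sets are non-compact; an element of $\StabIdf{0}$ is isotopic to $\id$ and hence cannot permute these distinguished lines nor the components of a level set, which pins $D\dif(0)$ inside $\Gamma_{0}$ --- and the potential ``winding'' element $-\id$, when it lies in $\Gamma(\amap)$, permutes level-set components and so never belongs to $\StabIdf{0}$, exactly as for $\amap=xy$. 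In either situation the image is trivial, and the lemma gives $\StabIdf{1}=\StabIdf{0}$, completing the proof.
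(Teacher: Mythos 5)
Your proposal is correct in substance and shares the paper's central mechanism: the $1$-jet at the origin, which a $C^{1}_{W}$-isotopy forces to move continuously inside the group of linear symmetries, is the invariant separating $\StabIdf{1}$ from $\StabIdf{0}$, and the hard converse step (trivial $1$-jet plus $C^{0}$-data implies $C^{\infty}$-isotopy) is in both treatments delegated to the jet results of \cite{Maks:jets,Maks:hamv2}. Where you genuinely diverge is the crucial case $\amap=Q_1^{\beta_1}\cdots Q_k^{\beta_k}$, $k\geq2$. The paper gets $-\id_{\RRR^2}\in\StabIdf{0}$ from a much more general statement: Theorem~\ref{th:main} on invariant contractions, applied through Theorem~\ref{th:Stabf-Bn}, where the $C^{0}$-isotopy is the conjugation $\con_t\circ\dif\circ\con_t^{-1}$ by the contraction $\con_t(x)=tx$, and which in fact yields the stronger identification $\StabIdf{0}=\Stabfpl$. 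You instead exploit periodicity of the Hamiltonian flow, writing $-z=\flow\bigl(z,T(\amap(z))/2\bigr)$ and cutting the shift time off near the origin. Your construction is sound (the scaling law for $T$, the fact that a flow-commuting, orientation-preserving, fixed-point-free involution of a periodic orbit is the half-period map, and the uniform shrinking of level curves near $0$ all check out) and is more elementary and self-contained; but it produces exactly one element of $\StabIdf{0}\setminus\StabIdf{1}$, whereas the paper's route computes $\StabIdf{0}$ completely and Theorem~\ref{th:main} is reusable for arbitrary singular partitions in any dimension.

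Two soft spots you should repair. First, your blanket claim that every $\dif\in\Stabf$ fixes the origin fails when $\amap=L_1^{\alpha_1}$ is a power of a single linear form: $(x,y)\mapsto(x+1,y)$ preserves $y^{\alpha_1}$, so the invariant $D\dif(0)\in\Gamma(\amap)$ is not even well defined there. That case must be handled separately, as the paper does for its cases (A)--(C) by quoting $\im(\Shift)=\EidhFld{0}$ from \cite{Maks:Shifts}, which gives $\StabIdf{\infty}=\cdots=\StabIdf{0}$ outright. Second, your sketch of the converse lemma (``flow-box straightening away from the singular set, derivatives extend across $0$'') misdescribes the mechanism and would not survive as written: the actual engine is that a map with identity tangent at $0$ which preserves orbits is a shift $\dif=\flow(\cdot,\sigma)$ along the \emph{reduced} Hamiltonian field $\hFld=\Fld/D$, where $D$ is the greatest common divisor of $\amap'_x$ and $\amap'_y$, and then $\flow(\cdot,t\sigma)$ is the desired $\infty$-isotopy. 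The division by $D$, together with the fact (Lemma~\ref{lm:DhF_Sf}) that a $C^{0}$-isotopy starting at the identity automatically preserves the half-line orbits of $\hFld$ inside $\amap^{-1}(0)$, is exactly what makes the jet results applicable in the cases with linear factors; since you defer to the same references this is a misdescription rather than a fatal gap, but the passage to $\hFld$ cannot be skipped.
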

This theorem is based on a rather general result about partition preserving diffeomorphisms, see Theorem~\ref{th:main}.
The applications of Theorem~\ref{th:Stabf1_not_Stabf0} will be given in another paper concerning smooth functions on surfaces with isolated singularities.

{\bf Structure of the paper.}
In Section~\ref{sect:Whitney_topologies} we describe homotopies which induce continuous paths into functional spaces with weak Whitney $C^{r}_{W}$-topologies.
Section~\ref{sect:sing-partit} introduces the so called \emph{singular partitions} of manifolds being the main object of the paper.
Section~\ref{sect:inv-contr} contains the main result, Theorem~\ref{th:main}, about invariant contractions of singular partitions.
In Section~\ref{sect:appl:stabs} an application of this theorem to local extremes of smooth functions is given.
Section~\ref{sect:lin_sym} contains a description of the group of linear symmetries of $\amap$.
Finally in Section~\ref{sect:proof-th:Stabf1_not_Stabf0} we prove Theorem~\ref{th:Stabf1_not_Stabf0}.

\section{$r$-homotopies}\label{sect:Whitney_topologies}
Denote $\NNz = \NNN\cup\{0\}$ and $\NNi =\NNN \cup \{0,\infty\}$.

Let $\man,\nan$ be two smooth manifolds of dimensions $m$ and $n$ respectively.
Then for every $r\in\NNi$ the space $C^{r}(\man,\nan)$ admits the so called \emph{weak} Whitney topology denoted by $C^{r}_{W}$, see e.g.~\cite{Leslie:Topology,Hirsch:DiffTop}.

Recall, e.g.~\cite[\S44.IV]{Kuratowski:2} that there exists a homeomorphism $$C^{0}(I,C^{0}(\man,\nan)) \approx C^{0}(\man\times I,\nan)$$ with respect to the corresponding $C^{0}_{W}$-topologies (also called \emph{compact open} ones) associating to every (continuous) path $w:I \to C^{0}(\man,\nan)$ a homotopy $\homot:\man\times I\to \nan$ defined by $\homot(x,t) = w(t)(x)$.

We will now describe homotopies inducing continuous paths $w:I \to C^{r}(\man,\nan)$ with respect to $C^{r}_{W}$-topologies.

\begin{definition}
Let $\homot:\man\times I\to\nan$ be a homotopy and $r\in\NNi$.
We say that $\homot$ is an {\bfseries $r$-homotopy} if
\begin{enumerate}
\item
$\homot_t:\man\to\nan$ is $C^{r}$ for every $t\in I$;
\item
partial derivatives of $\homot(x,t)$ by $x$ up to order $r$ continuously depend on $(x,t)$.
\end{enumerate}
More precisely, let $z\in\man\times I$.
Then in some local coordinates at $z$ we can regard $\homot$ as a map 
$$\homot=(\homot_1,\ldots,\homot_n):\RRR^m\times I\to\RRR^{n}$$
such that for every fixed $t$ and $i$ the function $\homot_i(x,t)$ is $C^{r}$.
Condition {\rm(2)} requires that for every $i=1,\ldots,n$ and every non-negative integer vector $k=(k_1,\ldots,k_m)$ of norm $|k|=\sum_{j=1}^{m} k_i \leq r$ the function 
$$
\frac{\partial^{|k|} \homot_i}{\partial x_1^{k_1} \,\cdots \,\partial x_m^{k_m}} (x_1,\ldots,x_m,t) 
$$
continuously depend on $(x,t)$.
\end{definition}

Thus a $0$-homotopy $\homot$ is just a usual homotopy.

It easily follows from the definition of $C^{r}_{W}$-topologies that a path $w:I \to C^{r}(\man,\nan)$ is continuous from the standard topology of $I$ to $C^{r}_{W}$-topology of $C^{r}(\man,\nan)$ if and only if the corresponding homotopy $\homot:\man\times I\to\nan$ is an $r$-homotopy.

We can also define a \emph{$C^{r}$-homotopy} as a $C^{r}$-map $\man\times I\to\nan$.
Evidently, every $C^{r}$-homotopy is an $r$-homotopy as well, but the converse statement is not true.

\begin{example}
Let $\homot:\RRR\times I\to\RRR$ be given by 
$$\homot(x,t) =
\left\{\begin{array}{cl} 
t \ln(x^2+t^2), & (x,t)\not=(0,0), \\
0, & (x,t)=(0,0).
\end{array}
\right.
$$
Then $\homot$ is continuous, while $\frac{\partial \homot}{\partial x}=\frac{2tx}{x^2+y^2}$ is $C^{\infty}$ for every fixed $t$ as a function in $x$ but discontinuous at $(0,0)$ as a function in $(x,t)$. 
In other words $\homot$ is a $0$-homotopy but not a $1$-homotopy.

Moreover, define $G:\RRR\times I\to\RRR$ by $G(x,t)=\int_{0}^{x} H(y,t)dy$.
Then $G$ a $1$-homotopy but not a $2$-homotopy.
\end{example}

\section{Singular partitions of manifolds}\label{sect:sing-partit}
Let $\man$ be a smooth manifold equipped with a partition $\partit=\{ \omega_i\}_{i\in\Lambda}$, i.e. a family of subsets $\omega_i$ such that 
$$
\man=\mathop\cup\limits_{i\in\Lambda} \omega_i,    \qquad \omega_i\cap\omega_j=\varnothing\ (i\not=j).
$$
In general $\Lambda$ may be even uncountable and $\omega_i$ are not necessarily closed in $\man$.
Let also $\Lambda'$ be a (possibly empty) subset of $\Lambda$ and $\sing=\{\omega_i\}_{i\in\Lambda'}$ be a subfamily of $\partit$ thought as a set of ``singular'' elements.
Then the pair $\struc=(\partit,\sing)$ will be called a \emph{singular partition} of $\man$.

\begin{example}\label{exmp:part-Fld}\rm
Let $\Fld$ be a vector field on $\man$, $\partitFld$ be the set of orbits of $\Fld$, and $\singFld$ be the set of singular points of $\Fld$.
Then the pair $\strucFld=(\partitFld,\singFld)$ will be called the \emph{singular partition of $\Fld$}.
\end{example}

\begin{example}\label{exmp:part-func}\rm
Let $\amap:\RRR^m\to\RRR^n$ be a smooth map, $x\in\RRR^m$ be a point, and $J(\amap,x)$ be the Jacobi matrix of $\amap$ at $x$.
Then $x\in\RRR^m$ is called \emph{critical} for $\amap$ if $\rank\, J(\amap,x) < \min\{m,n\}$.
Otherwise $x$ is \emph{regular}.
This definition naturally extends to maps between manifolds.

Let $\man$, $\nan$ be smooth manifolds and $\amap:\man\to\nan$ a smooth map.
Denote by $\singf$ the set of critical points of $\amap$.
Consider the following partition $\partitf$ of $\man$: a subset $\omega \subset\man$ belongs to $\partitf$ iff $\omega$ is either a critical point of $\amap$, or a connected component of the set of the form $\amap^{-1}(y) \setminus \singf$ for some $y\in\nan$.
Then the pair $\strucf=(\partitf,\singf)$ will be called the \emph{singular partition of $\amap$}.
Evidently, every $\omega\in\partitf\setminus\singf$ is a submanifold of $\man$.
\end{example}

\begin{example}\label{exmp:part-func-Fld}\rm 
Assume that in Example~\ref{exmp:part-func} $\dim\man = \dim\nan+1$ and both $\man$ and $\nan$ are orientable.
Then every element of $\partitf\setminus\singf$ is one-dimensional and orientations of $\man$ and $\nan$ allow to \emph{coherently orient all the elements of $\partitf\setminus\singf$}.
Moreover, it is even possible to construct a vector field $\Fld$ on $\man$ such that the singular partitions $\strucf$ and $\strucFld$ coincide.

In particular, let $\man$ be an orientable surface and $\amap:\man\to\RRR$ be a smooth function.
Then $\man$ admits a symplectic structure, and in this case we can assume that $\Fld$ is the corresponding Hamiltonian vector field of $\amap$.
\end{example}

\begin{example}\label{exmp:part-Fol}\rm
Let $\Fol$ be a foliation on $\man$ with singular leaves, $\partit$ be the set of leaves of $\Fol$, and $\sing$ be the set of its singular leaves (having non-maximal dimension).
Then the pair $\strucFol=(\partitFol,\singFol)$ will be called the \emph{singular partition of $\Fol$}.
This example generalizes all previous ones.
\end{example}

Let $\struc=(\partit,\sing)$ be a singular partition on $\man$.
For every open subset $\van\subset\man$ denote by $\EV$ the subset of $C^{\infty}(\van,\man)$ consisting of maps $\amap:\van\to\man$ such that 
\begin{enumerate}
\item
$\amap(\omega_i\cap\van)\subset\omega_i$ for all $\omega_i\in\partit$ and 
\item
$\amap$ is a local diffeomorphism at every point $z$ belonging to some singular element $\omega\in\sing$.
\end{enumerate} 

Let also $\DV$ be the subset of $\EV$ consisting of \emph{immersions}, i.e. \emph{local diffeomorphisms}.
For $\van=\man$ we abbreviate 
$$\Estr=\End(\struc,\man),\qquad \Dstr=\Diff(\struc,\man).$$

For every $r\in\NNi$ denote by $\EidV{r}$, resp. $\DidV{r}$, the path-component of the identity inclusion $i_{\van}:\van\subset\man$ in $\EV$, resp. in $\DV$, with respect to the induced $C^{r}_{W}$-topology, see Section~\ref{sect:Whitney_topologies}. 

Thus $\EidV{r}$ (resp. $\DidV{r}$) consists of maps (resp. immersions) $\van\subset\man$ which are $r$-homotopic ($r$-isotopic) to $i_{\van}:\van\subset\man$ in $\EV$ (resp. in $\DV$).

Evidently,
\begin{equation}\label{equ:Eid_Inclusions}
\EidV{\infty} \subset \cdots  \subset \EidV{1} \subset \EidV{0},
\end{equation}
and similar relations hold for $\DidV{r}$.

The following notion turns out to be useful for studying singular partitions of vector fields.

\subsection{Shift-map of a vector field.}\label{sect:shift-map}
Let $\Fld$ be a vector field on $\man$ and $$\flow:\man\times\RRR \supset \domflow \to\man$$ be the local flow of $\Fld$ defined on some open neighbourhood $\domflow$ of $\man\times 0$ in $\man\times \RRR$.
For every open subset $\van\subset\man$ let also 
$$\funcflow=\{\afunc\in C^{\infty}(\van,\RRR) \ : \ \Gamma_{\afunc}\subset \domflow \},$$
where $\Gamma_{\afunc} = \{ (x,\afunc(x)) \ : \ x\in\van\} \subset \man\times\RRR$ is the graph of $\afunc$.
Then $\funcflow$ is the largest subset of $C^{\infty}(\van,\RRR)$ on which the following \emph{shift-map} is defined:
$$
\Shift_{\van}: \funcflow \to C^{\infty}(\van,\man), \qquad 
\Shift_{\van}(\afunc)(x) = \flow(x,\afunc(x)),
$$
for $\afunc\in \funcflow$, $x\in\van$.

\begin{lemma}\label{lm:imShift_EidVFinfty}
Let $\strucFld$ be the singular partition of $\man$ by orbits of $\Fld$.
Then 
\begin{equation}\label{equ:imSh_EidFinf}
\im(\Shift_{\van}) \subset \EidVFld{\infty}.
\end{equation}
Moreover, if $\DidVFld{r}\subset\im(\Shift)$ for some $r\in\NNi$, then 
$$\DidVFld{\infty}=\cdots=\DidVFld{r+1}=\DidVFld{r}.$$
\end{lemma}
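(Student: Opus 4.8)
The plan is to treat the two assertions separately, using one and the same explicit homotopy. For the inclusion $\im(\Shift_{\van})\subset\EidVFld{\infty}$, I would take $\afunc\in\funcflow$ and join $\Shift_{\van}(\afunc)$ to the inclusion $i_{\van}$ by
\[
\homot(x,t)=\flow(x,t\afunc(x)),\qquad (x,t)\in\van\times I.
\]
First I would check it is well defined: since $\domflow$ meets each orbit in an interval about $0$ and $(x,\afunc(x))\in\domflow$, the whole segment $(x,t\afunc(x))$, $t\in I$, lies in $\domflow$, i.e.\ $t\afunc\in\funcflow$. Then $\homot_t=\Shift_{\van}(t\afunc)$, with $\homot_0=i_{\van}$ and $\homot_1=\Shift_{\van}(\afunc)$. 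Each $\homot_t$ moves points along their own trajectories, so it preserves every orbit; and at a point $z$ of a singular element, writing $\psi_\tau=\flow(\cdot,\tau)$, the relation $\partial_\tau\flow(z,\tau)=\Fld(\flow(z,\tau))=\Fld(z)=0$ shows $D_z\homot_t=D_z\psi_{t\afunc(z)}$, which is invertible. Hence $\homot_t\in\EVFld$, and since $\homot$ is jointly $C^{\infty}$ it is in particular an $\infty$-homotopy, giving $\Shift_{\van}(\afunc)\in\EidVFld{\infty}$.

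For the second statement it suffices to prove $\DidVFld{r}\subset\DidVFld{\infty}$, the reverse inclusion being automatic. Given $g\in\DidVFld{r}$, the hypothesis yields $\afunc$ with $g=\Shift_{\van}(\afunc)$, and I would reuse the same smooth homotopy $\homot(x,t)=\flow(x,t\afunc(x))$ joining $i_{\van}$ to $g$ inside $\EVFld$. The only point to upgrade is that every $\homot_t$ be a local diffeomorphism, for then $\homot$ is an $\infty$-isotopy in $\DVFld$ and $g\in\DidVFld{\infty}$. In a flow box $(s,u)$ with $\Fld=\partial/\partial s$ one has $\homot_t(s,u)=(s+t\afunc,u)$, whose Jacobian is block triangular with determinant $1+t\,\Fld(\afunc)$, where $\Fld(\afunc)$ denotes the derivative of $\afunc$ along the orbits; invertibility at singular points was already checked above. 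Thus everything reduces to the positivity
\[
1+\Fld(\afunc)>0
\]
at every regular point: given this, $1+t\,\Fld(\afunc)$ stays between $1$ and $1+\Fld(\afunc)$ and never vanishes for $t\in I$.

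This positivity is the heart of the matter and the step I expect to be the main obstacle, since being a shift alone does not force it (an orientation-reversing shift of a line orbit has $1+\Fld(\afunc)<0$). I would derive it from the fact that $g$ lies in the identity component. As $\DidVFld{r}\subset\DidVFld{0}$, choose a $C^{0}$-continuous path $G_t$ in $\DVFld$ from $i_{\van}$ to $g$ and restrict it to a regular orbit $O$: each $G_t$ carries $O\cap\van$ into $O$ by a local diffeomorphism of the one-dimensional manifold $O$, depending continuously on $t$ and equal to the identity at $t=0$. For two nearby points $a,b$ on one component of $O\cap\van$, the difference $G_t(b)-G_t(a)$ (in an orbit coordinate) is continuous in $t$ and never zero, hence keeps the sign it has at $t=0$; therefore every $G_t|_O$, and in particular $g|_O$, preserves the orbit orientation. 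In the flow coordinate $g$ is $s\mapsto s+\afunc$, so orientation preservation is exactly $1+\Fld(\afunc)>0$.

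I expect the genuinely delicate bookkeeping to be confined to this last paragraph: handling possibly disconnected $O\cap\van$, treating periodic (circle) orbits via degree rather than a single orbit coordinate, and making precise the passage between ``orientation along orbits'' and the sign of the Jacobian factor $1+\Fld(\afunc)$. The rest is the verification that the one homotopy $\flow(x,t\afunc(x))$ simultaneously serves both statements, which is essentially the computation already carried out for the first part.
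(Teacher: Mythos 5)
Your proposal is correct and follows essentially the same route as the paper: the same homotopy $\flow(x,t\afunc(x))$, the same criterion that $\Shift_{\van}(\beta)$ is a local diffeomorphism exactly where $1+d\beta(\Fld)\not=-1$ (which the paper simply cites from its reference [Maks:Shifts, Lemma~20] rather than re-deriving in a flow box), and the same key step that membership in the identity component forces orientation preservation along non-constant orbits, hence $1+d\afunc(\Fld)>0$, which survives the scaling $\afunc\mapsto t\afunc$. The only difference is one of detail: the paper asserts the orientation-preservation step without proof, whereas you supply (and honestly flag the delicate points of) the continuity argument behind it.
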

\begin{proof}
Let $\afunc\in\funcflow$ and $\amap=\Shift(\afunc)$, i.e. $\amap(x)=\flow(x,\afunc(x))$.
Then $\amap(\omega\cap\van)\subset\omega$ for every orbit of $\Fld$.
Moreover by~\cite[Lemma~20]{Maks:Shifts} $\amap$ is a local diffeomorphism at a point $x\in\van$ iff $d\afunc(\Fld)(x)\not=-1$, where $d\afunc(\Fld)(x)$ is the Lie derivative of $\afunc$ along $\Fld$ at $x$.
Hence $\amap$ is so at every singular point $z$ of $\Fld$, since $d\afunc(\Fld)(z)=0\not=-1$, \cite[Corollary~21]{Maks:Shifts}.
Therefore $\amap\in\EVFld$.
Moreover an $\infty$-homotopy of $\amap$ to $i_{\van}:\van\subset\man$ in $\EVFld$ can be given by $\amap_t(x)=\flow(x,t\afunc(x))$.
Thus $\amap\in\EidVFld{\infty}$.

Finally, suppose that $\amap\in\DidVFld{r}$.
Then the restriction of $\amap$ to any non-constant orbit $\omega$ of $\Fld$ is an orientation preserving local diffeomorphism.
Therefore $d\afunc(\Fld)(z)>-1$ on all of $\van$.
Hence $d(t\afunc)(\Fld)(z)>-1$ for all $t\in I$ as well, i.e. $\amap_t\in\DVFld$.
This implies that $\amap\in\DidVFld{\infty}$.
\end{proof}

\begin{example}\label{exmp:lin-v-f}\rm
Let $A$ be a real non-zero $(m\times m)$-matrix, $\Fld(x)=Ax$ be the corresponding linear vector field on $\RRR^m$, and $\van$ be a neighbourhood of the origin $0$.
Then the shift-map $\Shift_{\van}$ is given by 
$$\Shift(\afunc)(x) = \flow(x,\afunc(x))=e^{A\afunc(x)}x.$$
It is shown in~\cite{Maks:Shifts} that in this case $\im(\Shift_{\van})=\EidVFld{0}$.
Hence for all $r\in\NNi$ we have
$$\im(\Shift_{\van}) = \EidVFld{\infty} = \cdots =\EidVFld{r},$$
$$\DidVFld{\infty} = \cdots =\DidVFld{r}.$$
\end{example}

\section{Invariant contractions}\label{sect:inv-contr}
Let $\struc=(\partit,\sing)$ be a singular partition on a manifold $\man$.
We will say that a subset $\van\subset\man$ is \emph{$\struc$-invariant}, if it consists of full elements of $\struc$, i.e. if $\omega\in\partit$ and $\omega\cap\van\not=\varnothing$, then $\omega\subset\van$.

\begin{definition}
Let $Z\subset \man$ be a closed subset such that every point $z\in Z$ is a singular element of $\struc$, i.e. $\{z\}\in\sing$.
Say that $\struc$ has an {\bfseries invariant $r$-contraction to $Z$} if there exists a closed $\struc$-invariant neighbourhood $\van$ of $Z$ being a smooth submanifold of $\man$ and a homotopy $\con:\van\times I\to\van$ such that:
\begin{enumerate}
\item[\rm(i)]
$\con_1 = \id_{\van}$;
\item[\rm(ii)]
$\con_0$ is a proper retraction of $\van$ to $Z$, i.e. $\con_0(\van)=Z$, $\con_0(z)=z$ for $z\in Z$, and $\con_0^{-1}(K)$ is compact for every compact $K\subset Z$;
\item[\rm(iii)]
for every $t\in(0,1]$ the map $\con_t$ is a closed $C^{r}$-embedding of $\van$ into $\van$ such that 
for each $\omega\in\partit$ (resp. $\omega\in\sing$) its image $\con_t(\omega)$ is also an element of $\partit$ (resp. $\sing$);
\item[\rm(iv)]
for each $z\in Z$ the set $\van_z = \con_0^{-1}(z)$ is $\struc$-invariant, and $\con_t(\van_z) \subset \van_z$ for all $t\in I$.
\end{enumerate}
\end{definition}
Since $\van$ is $\struc$-invariant, it follows from (iii) that so is its image $\con_t(\van)$.

\begin{example}\label{exmp:std-sum-sq}\rm
Define $\amap:\RRR^m\to\RRR$ by $\amap(x_1,\ldots,x_m) = \sum_{i=1}^{m} x_i^2$.
Evidently, the singular partition $\strucf$ consists of the origin $0$ and concentric spheres centered at $0$.
For every $s>0$ let $\van_{s}=\amap^{-1}[0,s^2]$ be a closed $m$-disk of radius $s$.
Then $\van_s$ is $\strucf$-invariant and its invariant contraction of $\van_{s}$ to $Z=\{0\}$ can be given by $\con(x,t)=tx$.
\end{example}

\begin{example}\rm
The previous example can be parameterized as follows.
Let $p:\man\to Z$ be an $m$-dimensional vector bundle over a connected, smooth manifold $Z$.
We will identify $Z$ with the image $Z\subset \man$ of the corresponding zero-section of $p$.
Suppose that we are given a norm $\|\cdot\|$ on fibers such that the following function $\amap:\man\to\RRR$ is smooth:
$$
\amap(\xi,z)= \|\xi\|^2,\qquad z\in Z, \ \xi\in p^{-1}(z).
$$
Define the following singular partition $\struc=(\partit,\sing)$ on $\man$, where $\partit$ consists of subsets  $\omega_{s,z}=\amap^{-1}(s)\cap p^{-1}(z)$ for $s\geq0$ and $z\in Z$,
and $\sing=\{ \omega_{0,z} = \{z\} \ : \ z\in Z \}$ consists of points of $Z$.
Thus every fiber $p^{-1}(z)$ is $\struc$-invariant, and the restriction of $\struc$ to $p^{-1}(z)$ is the same as the one in Example~\ref{exmp:std-sum-sq}.

Fix $s>0$ and put $\van=\amap^{-1}[0,s]$.
Then $\van$ is $\struc$-invariant and a $\struc$-invariant contraction of $\van$ to $Z$ can be given by $\con(\xi,z,t)=(t\xi,z)$.
\end{example}

We will now generalize these examples.
Let $\amap:\RRR^m\to\RRR$ be a smooth function and suppose that there exists a neighbourhood $\van$ of $0$ and smooth functions $\alpha_1,\ldots,\alpha_m:\van\to\RRR$ such that 
\begin{equation}\label{equ:Jcond}
\amap = \alpha_1 \cdot \amap'_{x_1} + \cdots + \alpha_m \cdot \amap'_{x_m}.
\end{equation}
Equivalently, let $\Delta(\amap,0)$ be the \emph{Jacobi} ideal of $\amap$ in $C^{\infty}(\van,\RRR)$ generated by partial derivatives of $\amap$.
Then~\eqref{equ:Jcond} means that $\amap\in \Delta(\amap,0)$.

For instance, let $\amap$ be \emph{quasi-homogeneous of degree $d$ with weights $s_1,\ldots,s_m$}, i.e. $\amap(t^{s_1} x_1,\ldots,t^{s_m} x_m) = t^{d} \amap(x_1,\ldots,x_m)$ for $t>0$,  see e.g~\cite[\S12]{AVG:Sing-1}.
Equivalently, we may require that the function $$g(x_1,\ldots,x_m)=\amap(x_1^{s_1},\ldots,x_{m}^{s_m})$$ is homogeneous of degree $d$.
Then the following \emph{Euler identity} holds true:
$$
\amap = \frac{x_1}{s_1} \, \amap'_{x_1} + \; \cdots \;  + \frac{x_1}{s_m} \, \amap'_{x_m}.
$$
In particular, $\amap$ satisfies~\eqref{equ:Jcond}.
Moreover in the complex analytical case\footnote{\,I would like to thank V.\;A.\;Vasilyev for referring me to the paper~\cite{Saito} by K.~Saito.} the identity~\eqref{equ:Jcond} characterizes quasi-homogeneous functions, see~\cite{Saito}.

\begin{lemma}\label{lm:cond_Jid}
Let $\amap:\man\to\RRR$ be a smooth function and $z\in\man$ be an isolated local minimum of $\amap$.
Suppose that $\amap$ satisfies condition~\eqref{equ:Jcond} at $z$, i.e. $\amap\in	\Delta(\amap,z)$.
Then the singular partition $\strucf$ admits an invariant contraction to $z$.
\end{lemma}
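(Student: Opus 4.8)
The plan is to promote the purely algebraic identity~\eqref{equ:Jcond} into a dynamical scaling law. Working in local coordinates $(x_1,\dots,x_m)$ centred at $z$ on the neighbourhood where $\alpha_1,\dots,\alpha_m$ are defined, I would introduce the vector field $\Fld=\sum_{i=1}^{m}\alpha_i\,\partial/\partial x_i$ and observe that, by~\eqref{equ:Jcond}, its Lie derivative of $\amap$ is exactly $\Fld(\amap)=\sum_i\alpha_i\amap'_{x_i}=\amap$. Since $z$ is a critical point, $\amap'_{x_i}(z)=0$, so the right-hand side of~\eqref{equ:Jcond} vanishes at $z$ and therefore $\amap(z)=0$; as $z$ is an isolated local minimum we may assume $\amap>0$ on a punctured neighbourhood of $z$. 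Writing $\flow$ for the local flow of $\Fld$ and $\flow_t(x)=\flow(x,t)$, the identity $\Fld(\amap)=\amap$ integrates, for each fixed $x$, to the scaling law
\begin{equation*}
\amap(\flow_t(x))=e^{t}\,\amap(x),
\end{equation*}
which is the engine of the whole argument.

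Next I would fix the contracting neighbourhood. By Sard's theorem choose a small regular value $s>0$ so that the connected component $\van$ of $\amap^{-1}[0,s]$ containing $z$ is compact, lies in the coordinate domain of $\Fld$, and satisfies $\amap^{-1}(0)\cap\van=\{z\}$ (possible because $z$ is an isolated minimum). Then $\van$ is a compact smooth submanifold with boundary $\amap^{-1}(s)\cap\van$, and it is $\strucf$-invariant: being a connected component of a sublevel set, it contains entirely every partition element it meets. Along the backward flow $\amap$ is non-increasing by the scaling law, so for $x\in\van$ the orbit $\{\flow_t(x):t\le0\}$ stays in $\{\amap\le\amap(x)\}\subset\{\amap\le s\}$ and, being connected and meeting $\van$, remains inside $\van$; by compactness the backward flow on $\van$ is complete and $\flow_t(\van)\subset\van$ for $t\le0$. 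I would then define the contraction by
\begin{equation*}
\con(x,t)=
\begin{cases}
\flow(x,\ln t), & t\in(0,1],\\
z, & t=0,
\end{cases}
\end{equation*}
so that $\amap(\con(x,t))=t\,\amap(x)$.

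It then remains to verify conditions (i)--(iv). Condition (i) is immediate since $\con_1=\flow_0=\id_{\van}$, and each $\con_t$ with $t>0$ is a restriction of a diffeomorphism, hence a closed $C^{\infty}$-embedding of the compact set $\van$ into $\van$; this yields the conclusion for $r=\infty$ at once. For the partition-preservation in (iii) I would differentiate the scaling law to get $d\amap_{\flow_t(x)}\circ(d\flow_t)_x=e^{t}d\amap_x$, so $\flow_t$ carries $\singf$ onto $\singf$ and maps $\amap^{-1}(c)\setminus\singf$ diffeomorphically onto $\amap^{-1}(e^{t}c)\setminus\singf$; being a homeomorphism it sends connected components to connected components, whence $\con_t(\omega)\in\partitf$ (resp.\ $\sing$) whenever $\omega\in\partitf$ (resp.\ $\sing$). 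For (ii) and (iv), $\con_0\equiv z$ is a retraction of $\van$ onto $Z=\{z\}$ with $\con_0^{-1}(z)=\van$ compact (properness), and $\van_z=\van$ is $\strucf$-invariant with $\con_t(\van)\subset\van$.

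The one genuinely non-formal point, and the step I expect to be the main obstacle, is the continuity of $\con$ at $t=0$, i.e.\ that $\flow_t(x)\to z$ as $t\to-\infty$, uniformly enough to glue in the constant value $z$. I would handle this again through the scaling law: the $\alpha$-limit set of each backward orbit is a non-empty compact connected subset of $\amap^{-1}(0)\cap\van=\{z\}$, hence equals $\{z\}$, so $\flow_t(x)\to z$; and since $\amap(\con(x,t))=t\,\amap(x)\le ts$ while on the compact set $\van\setminus U$ the function $\amap$ is bounded below by a positive constant for any neighbourhood $U$ of $z$, one obtains $\con(x,t)\in U$ for \emph{all} $x\in\van$ once $t$ is small. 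This gives joint continuity at every $(x_0,0)$ and completes the construction of an invariant $\infty$-contraction, which in particular is an $r$-contraction for every $r\in\NNi$.
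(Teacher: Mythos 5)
Your proof is correct, and it takes a genuinely different route from the paper's, even though it ends up producing essentially the same homotopy. The paper also works with the vector field $\Fld=\sum_i\alpha_i\,\partial/\partial x_i$ (noting that $d\amap(\Fld)=\amap>0$ off $z$ makes $\Fld$ gradient-like), but it then constructs the Milnor cone structure $\condif:\van\to CL$ over the boundary level $L$, proves a separate algebraic-topology lemma (Lemma~\ref{lm:L_homeq_Sm1}) that $L$ is homotopy equivalent to $S^{m-1}$ -- so that all levels are connected and the partition elements in $\van$ are exactly $\{z\}$ and the whole level sets -- and defines the contraction as $\con_{\phi_s}$ in cone coordinates with $\phi_s(t)=t(1-s)$; the crucial smoothness of $\con_{\phi_s}$ at $z$ is not proved there but quoted from~\cite{Maks:BSM:2006}. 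Your observation that $d\amap(\Fld)=\amap$ integrates to the scaling law $\amap(\flow_\tau(x))=e^{\tau}\amap(x)$ short-circuits all of this: moving each level $c$ to the level $tc$ along $\Fld$-orbits is precisely the fixed-time flow map $\flow_{\ln t}$ (the same map as the paper's contraction, up to the reparametrisation $s=1-t$), so smoothness of $\con_t$ for $t>0$ comes for free, the appeal to~\cite{Maks:BSM:2006} disappears, and the sphere lemma is not needed either, since your component-by-component verification of (iii) never uses connectedness of the levels. What the paper's longer route buys is generality and reuse: it produces diffeomorphisms $\con_\phi$ for \emph{arbitrary} reparametrisations $\phi$ of the cone, and the cone structure itself is reused later, in the proof of Theorem~\ref{th:Stabf-Bn}.

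One step of yours needs tightening. You assert that $\flow_{\ln t}$ maps $\amap^{-1}(c)\setminus\singf$ diffeomorphically onto $\amap^{-1}(tc)\setminus\singf$, but $\flow_{\ln t}$ is defined only on $\van$, so a priori its image meets only part of the target level set, while condition (iii) requires $\con_t(\omega)$ to be a \emph{full} element of $\partitf$. The patch lies inside your own toolkit: show that $\con_t(\van)=\van\cap\amap^{-1}[0,ts]$. Indeed, for $y\in\van$ with $\amap(y)\le ts$ the forward flow $\flow_\sigma(y)$, $\sigma\in[0,-\ln t]$, keeps $0\le\amap\le s$ by the scaling law, hence stays in the compact set $\van$ and is defined up to time $-\ln t$, giving $y\in\flow_{\ln t}(\van)$. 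Consequently $\con_t(\van)$ is $\strucf$-invariant, and a component of a level set in $\van$ is carried by the homeomorphism $\con_t$ onto a component of a level set in $\con_t(\van)$, which by invariance is a full partition element. With this addition your argument is complete.
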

\begin{proof}
Since the situation is local, we may assume that $\man=\RRR^{m}$, $z=0$ is a unique critical point of $\amap$ being its global minimum,  $\amap(0)=0$, and  there exists an $\rval>0$ such that $\van=\amap^{-1}[0,\rval]$ is a smooth compact $m$-dimensional manifold with boundary $L=\amap^{-1}(\rval)$.

First we give a precise description of the partition $\partitf$ on $\van$.
Let $\Fld$ be any gradient like vector field on $\van$ for $\amap$, i.e. $d\amap(\Fld)(x)>0$ for $x\not=0$.
Then following~\cite[Th.~3.1]{Milnor:MorseTh} we can construct a diffeomorphism
$$\condif: \van\setminus\{0\} \to L\times (0,\rval]$$
such that $\amap\circ\condif^{-1}(y,t) = t$ for all $(y,t)\in  L\times (0,\rval]$, see Figure~\ref{fig:cone}.

Let  $CL=L\times[0,\rval]/ \{L\times0\}$ be the cone over $L$ and $L_t=\amap^{-1}(t)$ for $t\in(0,\varepsilon]$.
Since $\mathrm{diam}(L_t) \to 0$ when $t\to0$, we obtain that $\condif$ extends to a homeomorphism $\condif:\van\to CL$ by $\condif(0)=\{L\times0\}$.

For every $t\in(0,\rval]$ put $L_t=\amap^{-1}(t)$.
Then $\condif$ diffeomorphically maps $L_{t}$ onto $L\times \{t\}$.

\begin{figure}[ht]
\includegraphics[height=2.5cm]{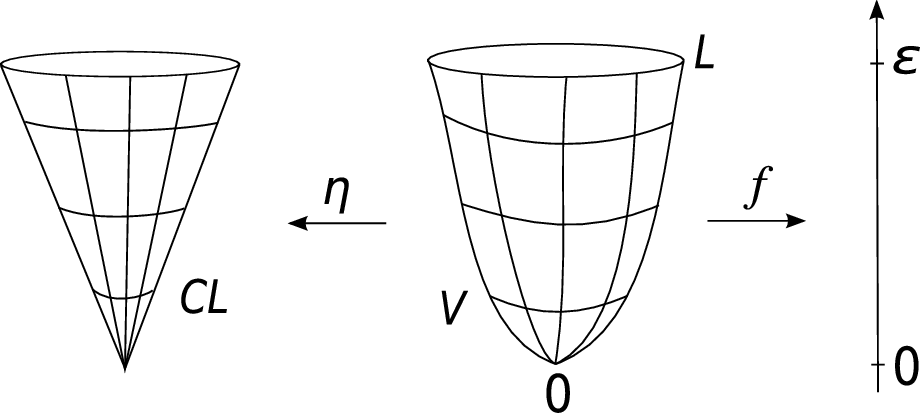}
\caption{}\protect\label{fig:cone}
\end{figure}

\begin{lemma}\label{lm:L_homeq_Sm1}
$L$ is homotopy equivalent to $S^{m-1}$.
\end{lemma}
We will prove this lemma below.
Then it will follow from the generalized Poincar\'e conjecture that $L$ is homeomorphic with the sphere $S^{m-1}$, and even diffeomorphic to $S^{m-1}$ for $m\not=4$.
For $k=1,2$ this statement is rather elementary, for $k=3$ this follows from a recent work of G.~Perelman~\cite{Perelman1,Perelman2}, for $k=4$ from M.~Freedman~\cite{Freedman}, and for $k\geq5$ from S.~Smale~\cite{Smale:PC}, see also~\cite{Milnor:hcob}.

In particular, every $L_t$ is connected, whence the partition $\partitf$ on $\van$ consists of a unique singular element $\{0\}\in\singf$ and sets $L_t$, $t\in(0,\rval]$.

Let us recall the definition of $\condif$.
Notice that every orbit of $\Fld$ starts at $0$ and transversely intersect every $L_t$.
For each $x\in\van\setminus\{0\}$ denote by $q(x)$ a unique point of the intersection of the orbit of $x$ with $L=L_{\rval}=\partial\van$.
Then $\condif:\van\setminus\{0\}\to L\times (0,\rval]$ can be given by the following formula:
$$
\condif(x) = (q(x),\amap(x)).
$$

Also notice that if $\phi:[0,\rval]\to [0,\rval]$ is a (not necessarily surjective) $C^{\infty}$ embedding such that $\phi(0)=0$, then we can define the embedding 
$$w_{\phi}: CL \to CL, \qquad w_{\phi}(y,t) = (y, \phi(t))$$
and therefore the embedding $\con_{\phi}=\condif\circ w_{\phi} \circ \condif^{-1}:\van\to\van$.
Then $\con_{\phi}$ is $C^{\infty}$ on $\van\setminus0$ and diffeomorphically maps $L_t$ onto $L_{q(t)}$ for all $t\in(0,\rval]$.
Moreover $\con_{\phi}(0)=0$, but in general $\con_{\phi}$ is not even smooth at $0$.

Suppose now that $\amap\in\Delta(\amap,0)$, i.e.\! we have a presentation~\eqref{equ:Jcond}.
Consider the following vector field 
$$\Fld=\alpha_1\,\frac{\partial}{\partial x_1} \;+\; \cdots \;+\; \alpha_{m}\,\frac{\partial}{\partial x_m}.$$
Then~\eqref{equ:Jcond} means that $\amap = d\amap(\Fld)$.
Since $\amap(x)>0$ for $x\not=0$, it follows that $\Fld$ is a gradient like vector field for $\amap$.
Therefore we can construct a homeomorphism $\condif:\van\to CL$ using $\Fld$ as above.
It follows from~\cite{Maks:BSM:2006} that in our case this $\condif$ has the following feature: 
\begin{itemize}
 \item 
if $\phi:[0,\rval]\to [0,\rval]$ is a $C^{\infty}$ embedding such that $\phi(0)=0$, then the corresponding embedding $\con_{\phi}:\van\to\van$ is a \emph{diffeomorphism onto its image}.
Moreover, if $\phi_{s}$, $(s\in I)$, is a $C^{\infty}$ isotopy, then so is $\con_{\phi_s}:\van\to\van$.
\end{itemize}

In particular, consider the following homotopy 
$$\phi:[0,\rval]\times I \to [0,\rval], \qquad \phi(t,s)=t(1-s)$$
which contracts $[0,\rval]$ to a point and being an isotopy for $t>0$.
Then the induced homotopy $\con:\van\times I\to \van$ is an invariant $\infty$-contraction of $\strucf$ to $0$.
\end{proof}

\begin{proof}[Proof of Lemma~\ref{lm:L_homeq_Sm1}.]
It suffices to establish that 
\begin{equation}\label{equ:pikL}
\pi_{k}L \approx \pi_{k}S^{m-1}=
\left\{
\begin{array}{cl}
 0, & k=0,\ldots,m-2, \\
\ZZZ, & k=m-1.
\end{array}
\right.
\end{equation}
Then the generator $\mu:S^{m-1}\to L$ of $\pi_{m-1}L \approx \ZZZ$ will yield isomorphisms of the homotopy groups $\pi_{k}S^{m-1}\approx\pi_{k}L$ for all $k\leq m-1 = \dim S^{m-1}=\dim L$.
Now by the well-known Whitehead's theorem $\mu$ will be a homotopy equivalence between $S^{m-1}$ and $L$.

For the calculation of homotopy groups of $L$ consider the exact sequence of homotopy groups of the pair $(\van,L)$:
\begin{equation}\label{equ:exseq:VL}
\cdots \to \pi_{k+1}(\van,L) \to \pi_{k} L \to \pi_{k}\van \to \cdots
\end{equation}
Since $\van$ is homeomorphic with the cone $CL$, $\van$ is contractible, whence $\pi_k\van=0$ for all $k\geq0$.

Moreover, $\pi_{k+1}(\van,L)=0$ for all $k=0,\ldots,m-2$.
Indeed, let $$\xi:(D^{k+1},S^{k}) \to (\van,L)$$ be a continuous map.
We have to show that $\xi$ is  homotopic (as a map of pairs) to a map into $L$.
Since $k+1\leq m-1<\dim\van$, $\xi$ is homotopic to a map into $\van\setminus\{0\}$.
But $L$ is a deformation retract of $\van\setminus\{0\}$, therefore $\xi$ is homotopic to a map into $L$.

Now it follows from~\eqref{equ:exseq:VL} that $\pi_{k} L=0$ for $k=0,\ldots,m-2$.
Hence from Hurewicz's theorem we obtain that $\pi_{m-1}L \approx H_{m-1}(L,\ZZZ)$.
It remains to note that $L$ is a connected closed orientable $(m-1)$-manifold, whence (by Poincar\'e duality) $H_{m-1}(L,\ZZZ) \approx \ZZZ$.
This proves~\eqref{equ:pikL}.
\end{proof}

\begin{remark}\label{rem:quasihom}\rm
If $\amap$ is a quasi-homogeneous function of degree $d$ with weights $s_1,\ldots,s_m$, then we can define an invariant contraction by 
$$ \con(x_1,\ldots,x_m,t) = (t^{s_1} x_1,\ldots,t^{s_m} x_m).$$
If $\amap$ is \emph{homogeneous}, then we can even put $\con(x,t)=tx$.
\end{remark}

\begin{theorem}\label{th:main}
Let $\struc=(\partit,\sing)$ be a singular partition on a manifold $\man$, and $Z\subset\man$ be a closed subset such that every $z\in Z$ is a singular element of $\partit$, i.e. $\{z\}\in\sing$.
Suppose that $\struc$ has an invariant $\infty$-contraction to $Z$ defined on a $\struc$-invariant neighbourhood $\van$ of $Z$.
Let also $\dif\in\EV$ be a map fixed outside some neighbourhood $U$ of $z$ such that $\overline{U}\subset \Int\van$.
Then $\dif\in \EidV{0}$, though $\dif$ not necessarily belongs to $\EidV{r}$ for some $r\geq1$.
\end{theorem}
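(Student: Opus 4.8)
The plan is to collapse the support of $\dif$ onto $Z$ by conjugating with the invariant $\infty$-contraction $\con$, and to show that the resulting one-parameter family is a continuous path (a $0$-homotopy) in $\EV$ joining the inclusion $i_{\van}$ to $\dif$. For $t\in(0,1]$ I define $\dif_t:\van\to\van$ by
$$
\dif_t(x)=
\begin{cases}
\con_t\circ\dif\circ\con_t^{-1}(x), & x\in\con_t(\van),\\
x, & x\in\van\setminus\con_t(\van).
\end{cases}
$$
Since by (iii) each $\con_t$ is a closed $C^{\infty}$-embedding of $\van$, its image $\con_t(\van)$ is closed, and because $\overline{U}\subset\Int\van$ the map $\dif$ is the identity near $\partial\van$; hence the two branches agree on a neighbourhood of $\con_t(\partial\van)$ and $\dif_t$ is a well-defined $C^{\infty}$-map, with $\dif_1=\dif$ as $\con_1=\id$.

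First I would check that $\dif_t\in\EV$ for every $t\in(0,1]$. This is formal: by (iii) the embedding $\con_t$ sends each $\omega\in\partit$ (resp.\ $\omega\in\sing$) to an element of $\partit$ (resp.\ $\sing$), while $\dif$ preserves every element of $\partit$ and is a local diffeomorphism along the singular ones, so conjugation transports both properties to $\dif_t$. In particular $\dif$ fixes $Z$ pointwise (as $\{z\}\in\sing$ and $\dif\in\EV$), and $\con_t(z)=z$ by (ii) and (iv), so $\dif_t|_{Z}=\id$.

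The heart of the argument is to set $H(x,t)=\dif_t(x)$ for $t>0$ and $H(x,0)=x$, and to prove that $H:\van\times I\to\van$ is continuous, i.e.\ a $0$-homotopy. For $t>0$ continuity is clear, so the only issue — and the main obstacle — is continuity across $t=0$, where $\con_t^{-1}$ degenerates as $\con_0$ collapses $\van$ onto $Z$. Two features control this. First, $\dif_t$ differs from the identity only on $\con_t(\overline{U})$, which shrinks toward $Z$ as $t\to0$. Second, $H$ preserves each fibre $\van_z=\con_0^{-1}(z)$: both $\con_t$ (by (iv)) and $\dif$ (since $\van_z$ is $\struc$-invariant, hence a union of elements of $\partit$, each preserved by $\dif$) map $\van_z$ into itself; and $\dif$, fixing the complement of $U$, maps $\overline{U}$ onto $\overline{U}$. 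Consequently, for $x\in\con_t(\van_z\cap\overline{U})$ both $x$ and $\dif_t(x)$ lie in $\con_t(\van_z\cap\overline{U})$, so $d(H(x,t),x)\le\diam\,\con_t(\van_z\cap\overline{U})$. Since $\con_0$ retracts $\van_z$ to the single point $z$, this diameter tends to $0$, exactly as $\diam(L_t)\to0$ was used above; reducing to compact pieces via the properness in (ii), I expect to obtain $\sup_{x\in K}d(H(x,t),x)\to0$ as $t\to0$ for every compact $K$. This says $\dif_t\to\id$ in the $C^{0}_{W}$-topology and yields continuity of $H$ at $t=0$.

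Granting this, each $\dif_t$ is $C^{\infty}$ and $H$ is a genuine $0$-homotopy in $\EV$ from $i_{\van}$ to $\dif$, whence $\dif\in\EidV{0}$. The same $H$ is, however, typically \emph{not} a $1$-homotopy: the contraction $\con$ is only continuous and its $x$-derivatives blow up near $Z$ as $t\to0$, so this construction does not place $\dif$ in $\EidV{r}$ for any $r\ge1$. That the conclusion cannot be strengthened in general — that there really exist $\struc$ and $\dif$ with $\dif\notin\EidV{1}$ — is not established here but comes from the applications, e.g.\ products of at least two distinct definite quadratic forms, cf.\ Theorem~\ref{th:Stabf1_not_Stabf0}.
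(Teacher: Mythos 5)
Your proof is essentially the paper's own: the same conjugation homotopy $H_t=\con_t\circ\dif\circ\con_t^{-1}$ extended by the identity, the same verification that each $H_t\in\EV$, and the same key mechanism for continuity at $t=0$, namely fibre-invariance (iv) combined with properness (ii) and compactness --- your uniform displacement estimate is just a metric rephrasing of the paper's neighbourhood-based Claim, and like the paper you correctly defer the sharpness statement ($\dif\notin\EidV{1}$ in general) to the applications. One inessential blemish: for a non-injective $\dif\in\EV$ the inclusion $\dif(\overline{U})\subset\overline{U}$ need not hold, but you do not need it, since $x$ and $H(x,t)$ both lie in $\con_t(\van_{y})$ with $y=\con_0(x)$, and $\diam\,\con_t(\van_{y})\to 0$ uniformly for $y$ in compact subsets of $Z$ because properness of $\con_0$ makes the sets $\con_0^{-1}(K)$ compact.
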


\begin{remark}\rm
Let $\strucf$ be the singular partition of $\amap(x) = \|x\|^2$ as in Example~\ref{exmp:std-sum-sq}, $\van$ be the unit $n$-disk centered at $0$ and $\con:\van\times I\to\van$ be the invariant contraction of $\strucf$ to a point $Z=\{0\}$ defined by $\con(x,t)=tx$.
Then a $0$-homotopy between $\dif$ and $\id_{\van}$ can be defined by 
$$
\homot_t(x)
\left\{
\begin{array}{cl}
t \dif(\frac{x}{t}), &  t>0, \\ [1.5mm]
0, & t=0.
\end{array}
\right.
$$
c.f.~\cite[Ch.4, Theorems 5.3 \& 6.7]{Hirsch:DiffTop}.
Theorem~\ref{th:main} generalizes this example.
\end{remark}

\begin{proof}[Proof of Theorem~\ref{th:main}.]
Let $\con:\van\times I\to\van$ be an invariant $\infty$-contraction of $\strucFld$ to $Z$.
Define the following map $\homot:\van\times I\to\man$ by
$$
\homot(x,t) = 
\left\{
\begin{array}{cl}
\con_t \circ \dif \circ \con_t^{-1}(x), & \text{if} \ t>0 \ \text{and} \ x\in \con_t(\van), \\ [1.5mm]
x, & \text{otherwise}.
\end{array}
\right.
$$
We claim that $\homot$ is a $0$-homotopy (i.e. just a homotopy) between $\dif$ and the identity inclusion $i_{\van}:\van\subset\man$ in $\EV$.
To make this more obvious we rewrite the formulas for $\homot$ in another way.

The homotopy $\con$ can be regarded as the composition 
$$
\con = p_1 \circ \bcon: \;\van\times I\; \xrightarrow{~~\bcon~~} \;\van\times I\; \xrightarrow{~~p_1~~} \;\van,
$$
where $\bcon$ is the following level-preserving map
$$
\bcon:\van\times I\to \van\times I,
\qquad 
\bcon(x,t) = (\con(x,t), t),
$$
and $p_1:\van\times I \to \van$ is the projection to the first coordinate.
\begin{figure}[ht]
\includegraphics[height=3cm]{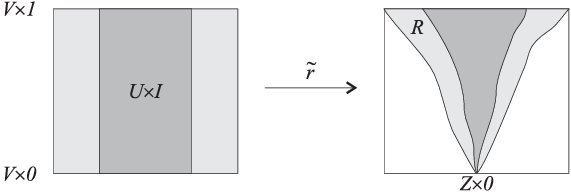}
\caption{}\protect\label{fig:inv-contr}
\end{figure}
It follows from the definition that $\con$ yields a level-preserving embedding $\van\times (0,1]$ to $\van\times I$, see Figure~\ref{fig:inv-contr}.
Denote 
$$\imcon'= \bcon(\van\times (0,1]), \qquad \qquad \imcon = \bcon(\van\times I).$$
Then $\imcon\setminus \imcon'=Z \times 0$.
Define also the following map
$$\tdif:\van\times I\to \van\times I, \qquad \qquad \tdif(x,t)=(\dif(x), t).$$
In these terms, the homotopy $\homot$ is defined by
$$
\homot=p_1\circ\bhomot:\;\van\times I\; \xrightarrow{~~\bhomot~~} \;\van\times I\; \xrightarrow{~~p_1~~} \;\van,$$
where $\bhomot:\van\times I \to \van\times I$ is a level-preserving map given by
$$
\bhomot(x,t) = 
\left\{
\begin{array}{cl}
\bcon \circ \tdif \circ \bcon^{\,\,-1}(x,t), & (x,t)\in \imcon', \\ [1.5mm]
(x,t), & (x,t)\in (\van\times I)\setminus \imcon'.
\end{array}
\right.
$$
Now we can prove that $\homot$ has the desired properties.

Since $\con_1=\id_{\van}$, we have $\homot_1=\dif$.
Moreover $\homot_0=\id_{\van}$.

{\bf 1. Continuity of $\bhomot$ on $\van\times(0,1]$.}
Notice that $\bcon \circ \tdif \circ \bcon^{-1}$ is well-defined and continuous on $R'$.
Moreover, since $\dif$ is fixed on $\van\setminus U$, it follows that $\tdif$ is fixed on $(\van\setminus U)\times I$, whence $\bcon \circ \tdif \circ \bcon^{-1}$ is fixed on the subset $\bcon\bigl( (\van\setminus U)\times (0,1] \bigr) \subset \imcon'$.
This implies that $\bhomot$ is continuous on $\van\times(0,1]$.

{\bf 2. Continuity of $\bhomot$ when $t\to0$.}
Let $z\in\van$. Then $\bhomot(z,0)=(z,0)$.

Suppose that $z\in\van\setminus Z$.
Since $Z$ is closed in $\van$, $\bhomot$ is also fixed and therefore continuous on some neighbourhood of $(z,0)$ in $(\van\times I) \setminus R$.

Let $z\in Z$ and let $W$ be a neighbourhood of $(z,0)$ in $\van\times I$.
We have to find another neighbourhood $W'$ of $(z,0)$ such that $\bhomot(W') \subset W$.

Recall that for every $y\in Z$ we denoted $\van_y = \con_0^{-1}(y)$.
Then $\van_y$ is compact and $\struc$-invariant.

\noindent
{\bf Claim.}
{\em There exist $\varepsilon>0$ and an open neighbourhood $N$ of $z$ in $\van$ such that   $\overline{N}\times[0,\varepsilon]\subset W$ and 
\begin{equation}\label{equ:Vy_0e__V}
\bcon(\van_y \times [0,\varepsilon] ) \subset W, \quad (y\in \overline{N}\cap Z).
\end{equation}}
\noindent{\em Proof.}
Let $N$ be an open neighbourhood of $z$ such that $\overline{N}$ is compact and $\overline{N}\times 0 \subset W$.
Denote
$$
Q \;=\; \con_0^{-1}(\overline{N} \cap Z) \;=\; 
\mathop\cup\limits_{y\in\overline{N}\cap Z} \con_0^{-1}(y) \;=\;
\mathop\cup\limits_{y\in\overline{N}\cap Z} \van_y.
$$
Then $Q$ is a compact subset of $\van$, and $\bcon^{-1}(W)$ is an open neighbourhood of $Q\times 0$ in $\van\times I$.
Hence there exists $\varepsilon>0$ such that $Q\times [0,\varepsilon]\subset \bcon^{-1}(W)$.
This implies~\eqref{equ:Vy_0e__V}.
Decreasing $\varepsilon$ is necessary we can also assume that $\overline{N}\times[0,\varepsilon]\subset W$ as well.
\qed 

Denote $W'= N\times[0,\varepsilon)$.
We claim that $\bhomot(W')\subset W$.

Let $(x,t)\in W'$.
If either $(x,t)\in W'\setminus R'$ or $t=0$, then $\bhomot(x,t)=(x,t)\in W' \subset W$.

Suppose that $(x,t)\in W'\cap R'$. Then $t>0$.
Let also $y=\con_0(x)\in Z$.
Then $\bcon^{\,\,-1}(x,t) \in \van_y\times t$ for all $t\in I$.
Hence
$$
\bcon\circ\tdif\circ\bcon^{\,\,-1}(x,t) 
\;\;\in\;\;
\bcon\circ\tdif(\van_y\times t) 
\;\;\subset\;\;
\bcon(\van_y\times t) 
\;\;\stackrel{\eqref{equ:Vy_0e__V}}{\subset}\;\;
W.
$$
In the second inclusion we have used a $\struc$-invariantness of $\van_y$ and the assumption that $\dif\in\EV$.

{\bf 3. Proof that $\homot_t\in\EV$ for $t\in I$.}
We have to show that (i)~for every $t\in I$ the mapping $\homot_t$ is $C^{\infty}$, (ii)~$\homot_t(\omega)\subset \omega$ for every element $\omega\in\partit$ included in $\van$, and  (iii)~$\homot_t$ is a local diffeomorphism at every point $z$ belonging to some $\omega\in\sing$.

(i) Since $\con_t$, $(t>0)$, is $C^{\infty}$ and $\dif$ is fixed on $\van\setminus U$, it follows that $\homot_t$ is $C^{\infty}$ as well.

(ii) Let $\omega \subset \van$ be an element of $\partit$ (resp. $\sing$).

If $\omega\subset\van\setminus\con_t(\van)$, then $\homot_t$ is fixed on $\omega$, whence $\homot_t(\omega)=\omega\in\partit$  (resp. $\sing$).

Suppose that $\omega\subset\con_t(\van)$.
Since $\con_t(\van)$ is $\struc$-invariant, $\omega=\con_t(\omega')$ for some another element $\omega'\in\partit$  (resp. $\sing$).
Then $\dif(\omega')\subset\omega'$, whence
$$
\homot_t(\omega) \;\;=\;\;
\con_t \circ \dif \circ \con_t^{-1}(\omega) \;\;=\;\;
\con_t \circ \dif (\omega') \;\;\subset\;\;
\con_t (\omega') \;\;=\;\; \omega.
$$

(iii) Suppose that $\omega\in\sing$ and let $x\in\omega$.

If $x \in \van\setminus\con_t(U)$, then $\homot_t$ is fixed in a neighbourhood of $x$, and therefore it is a local diffeomorphism at $x$.

Suppose that $x=\con_t(x')\in\con_t(U)$ for some $x'\in U$ and let $\omega'\in\sing$ be the element containing $x'$.
Then $\dif$ is a local diffeomorphism at $x'$, whence $\homot_t=\con_t \circ \dif \circ \con_t^{-1}$ is a local diffeomorphism at $x$.
\end{proof}

\section{Stabilizers of smooth functions}\label{sect:appl:stabs}
Let $B^m\subset\RRR^m$ be the unit disk centered at the origin $0$, $S^{m-1} = \partial B^m$ be its boundary sphere, $\amap:B^m\to\RRR$ be a $C^{\infty}$ function, and $\strucf$ be the singular partition of $\amap$.

Let $\Stabf=\{\dif\in\Diff(B^m) \, : \, \amap\circ \dif = \amap \}$ be the stabilizer of $\amap$ with respect to the right action of the group $\Diff(B^m)$ of diffeomorphisms of $B^m$ on the space $C^{\infty}(B^m,\RRR)$.
Denote by $\Stabfpl$ the subgroup of $\Stabf$ consisting of orientation preserving diffeomorphisms.
For $r\in\NNi$ let also $\StabIdf{r}$ be the identity component of $\Stabf$ with respect to the $C^{r}$-topology.
Then 
$$
\StabIdf{\infty} \;\subset\cdots\subset\; \StabIdf{r+1} \;\subset\; \StabIdf{r} \;\subset\cdots\subset\;  \StabIdf{0}  \;\subset\; \Stabfpl.
$$

\begin{theorem}\label{th:Stabf-Bn}
Let $m\geq2$, $\amap:B^m\to [0,1]$ be a $C^{\infty}$ function such that $0$ is a unique critical point of $\amap$ being its global minimum, $\amap(0)=0$, and $\amap(S^{m-1})=1$.
Denote by $\Stab$ the subgroup of $\Stabf$ consisting of diffeomorphisms $\dif$ such that $\dif|_{S^{m-1}}:S^{m-1}\to S^{m-1}$ is $C^{\infty}$-isotopic to $\id_{S^{m-1}}$.
Suppose also that the singular partition $\strucf$ of $\amap$ has an invariant $\infty$-contraction to $0$.
Then $\Stab\subset \StabIdf{0}$.

If $m=2,3,4$, then $\Stab=\StabIdf{0}=\Stabfpl$.
\end{theorem}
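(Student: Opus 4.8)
The plan is to prove the two assertions separately. For $\Stab\subset\StabIdf{0}$ I would first deform a given $\dif\in\Stab$, inside $\Stabf$ and $C^{0}$-continuously, to a diffeomorphism fixed near $\partial B^m$, and then apply Theorem~\ref{th:main}; for the equalities in dimensions $m=2,3,4$ I would combine this with the displayed chain $\StabIdf{0}\subset\Stabfpl$ and the path-connectedness of $\Diff^{+}(S^{m-1})$. To begin, note that $\dif\in\Stabf$ preserves every level set $\amap^{-1}(c)$, and by Lemma~\ref{lm:L_homeq_Sm1} each $L_t=\amap^{-1}(t)$, $t\in(0,1]$, is connected, so $\dif$ preserves each element of $\partitf$ and $\Stabf=\Diff(\strucf,B^m)$. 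Via the homeomorphism $\condif:B^m\setminus\{0\}\to S^{m-1}\times(0,1]$ of the proof of Lemma~\ref{lm:cond_Jid}, under which $\amap$ becomes the second projection, $\dif$ takes on a collar $S^{m-1}\times(1-\varepsilon,1]$ the level-preserving form $\dif(\theta,t)=(\hat h_t(\theta),t)$ with $\hat h_t\in\Diff(S^{m-1})$ and $\hat h_1=\dif|_{S^{m-1}}$.

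The main obstacle is this reduction carried out \emph{without leaving $\Stabf$}, i.e.\ keeping the coordinate $t$ fixed at every stage. I would do it in two level-preserving moves. First, pick a $C^{\infty}$ isotopy $F:S^{m-1}\times I\to S^{m-1}$ with $F_0=\id$ and $F_1=\dif|_{S^{m-1}}$ (available precisely because $\dif\in\Stab$) and a cutoff $\mu:(1-\varepsilon,1]\to[0,1]$ with $\mu\equiv0$ near $1-\varepsilon$ and $\mu(1)=1$; then $G_\tau(\theta,t)=(F_{\tau\mu(t)}(\theta),t)$, extended by the identity, is a smooth path in $\Stabf$ from $\id$ to a map whose boundary restriction is $\dif|_{S^{m-1}}$, so $\tau\mapsto G_\tau^{-1}\circ\dif$ joins $\dif$ in $\Stabf$ to $\dif''=G_1^{-1}\circ\dif$ with $\dif''|_{S^{m-1}}=\id$. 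Writing $\dif''(\theta,t)=(\hat k_t(\theta),t)$ with $\hat k_1=\id$, the second move reparametrizes only the level index, $\dif''_\tau(\theta,t)=(\hat k_{\rho_\tau(t)}(\theta),t)$, where $\rho_\tau$ is smooth, equals the identity near $1-\varepsilon$ (so the change glues with the unaltered interior), $\rho_0=\id$, and $\rho_1\equiv1$ on a smaller collar $(1-\delta,1]$. As $t$ is never moved, each $\dif''_\tau\in\Stabf$, and $\dif_1:=\dif''_1$ equals the identity on $S^{m-1}\times(1-\delta,1]$, hence is fixed outside $U=\amap^{-1}[0,1-\delta)$ with $\overline{U}\subset\Int B^m$. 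I expect this level-exact collar surgery to be the delicate point, since no step may reparametrize $\amap$ itself.

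With $\dif_1\in\End(\strucf,B^m)$ fixed outside $U$, $\overline{U}\subset\Int B^m$, and $\strucf$ carrying an invariant $\infty$-contraction $\con$ to $0$ by hypothesis, Theorem~\ref{th:main} produces the $C^{0}$-homotopy $\homot_t=\con_t\circ\dif_1\circ\con_t^{-1}$ (the identity off $\con_t(B^m)$) joining $\id$ to $\dif_1$ in $\End(\strucf,B^m)$. Since $\con_t$ is a closed $C^{\infty}$-embedding and $\dif_1$ is a diffeomorphism fixed near $\partial B^m$, each $\homot_t$ is actually a diffeomorphism of $B^m$ preserving every element of $\partitf$, so $\amap\circ\homot_t=\amap$; thus $\homot$ is a $C^{0}$-continuous path in $\Stabf$, giving $\dif_1\in\StabIdf{0}$ and hence $\dif\in\StabIdf{0}$. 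This establishes $\Stab\subset\StabIdf{0}$.

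Now suppose $m=2,3,4$. The inclusion $\StabIdf{0}\subset\Stabfpl$ holds by the displayed chain and $\Stab\subset\StabIdf{0}$ was just shown, so it remains to prove $\Stabfpl\subset\Stab$. If $\dif\in\Stabfpl$ then $\dif(S^{m-1})=\amap^{-1}(1)=S^{m-1}$, and as $\dif$ preserves the orientation of $B^m$ and sends the outward normal at the boundary to itself, $\dif|_{S^{m-1}}\in\Diff^{+}(S^{m-1})$. For $m-1=1,2,3$ the group $\Diff^{+}(S^{m-1})$ is path-connected --- classically for $S^1$, by Smale for $S^2$, and by Cerf for $S^3$ --- so $\dif|_{S^{m-1}}$ is $C^{\infty}$-isotopic to $\id$ and $\dif\in\Stab$. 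Therefore $\Stabfpl\subset\Stab\subset\StabIdf{0}\subset\Stabfpl$, and the three groups coincide. Apart from the collar reduction, the only deep input is Cerf's theorem in the case $m=4$.
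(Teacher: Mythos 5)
Your overall strategy is the paper's own: deform $\dif$ inside $\Stabf$ by collar moves to a diffeomorphism with controlled support, apply Theorem~\ref{th:main}, and for $m=2,3,4$ close the chain $\Stabfpl\subset\Stab\subset\StabIdf{0}\subset\Stabfpl$ via connectedness of $\Diff^{+}(S^{m-1})$ (citing Cerf for $S^3$ is legitimate; the paper cites Hatcher's stronger theorem). Your two collar moves are in substance the paper's Claims~\ref{clm:1} and~\ref{clm:2}, and your explicit remark that the homotopy produced by Theorem~\ref{th:main} consists of $\amap$-preserving diffeomorphisms when the input is a diffeomorphism is a point the paper uses implicitly.

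There is, however, a genuine gap at the invocation of Theorem~\ref{th:main}. By the definition in Section~\ref{sect:inv-contr}, the hypothesis of Theorem~\ref{th:Stabf-Bn} provides an invariant $\infty$-contraction defined only on \emph{some} closed $\strucf$-invariant neighbourhood $\van$ of $0$, which (as the paper notes) may be assumed of the form $\van=\amap^{-1}[0,2c]$ with $c\in(0,\tfrac12)$ possibly very small; nothing gives a contraction on all of $B^m$. Theorem~\ref{th:main} requires the map to be fixed outside a set $U$ with $\overline{U}\subset\Int\van$. Your $\dif_1$ is fixed only outside $U=\amap^{-1}[0,1-\delta)$, and as soon as $2c<1-\delta$ the inclusion $\overline{U}\subset\Int\van$ fails; correspondingly, your formula ``$\homot_t=\con_t\circ\dif_1\circ\con_t^{-1}$, the identity off $\con_t(B^m)$'' tacitly treats $\con$ as defined on all of $B^m$, which is not what the hypothesis gives. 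So the step does not follow as written. The repair is short and is exactly where the paper puts its effort (Lemma~\ref{lm:isotopy_h_hpr}): since $\dif''=G_1^{-1}\circ\dif$ preserves $\amap$ and every level is connected, it is level-preserving on all of $B^m\setminus\{0\}$, so your second move can be performed globally rather than on a boundary collar. Take $\rho_\tau:(0,1]\to(0,1]$ smooth, equal to the identity on $(0,a]$ for some $a\in(0,c)$ (so the deformation is relative to $\amap^{-1}[0,a]$ and glues smoothly at the origin), with $\rho_0=\id$ and $\rho_1\equiv 1$ on $[c,1]$. The resulting $\dif_1$ is then fixed on $\amap^{-1}[c,1]$, so $U=\amap^{-1}[0,c)$ satisfies $\overline{U}=\amap^{-1}[0,c]\subset\amap^{-1}[0,2c)=\Int\van$, and Theorem~\ref{th:main} applies. (Alternatively one could transplant the given contraction to all of $B^m$ by conjugating with a level-reparametrizing diffeomorphism $B^m\to\van$ equal to the identity near $0$, but that too is an argument which must be supplied, not assumed.)
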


For the proof we need the following two simple standard statements concerning smoothing homotopies at the beginning and at the end, see e.g.~\cite[pp.74 \& 118]{Pontryagin}  and~\cite[p.~205]{MilnorWallace:DiffTop}.
Let $M$ be a closed smooth manifold.
\begin{claim}\label{clm:1}
Let $a,b,c\in\RRR$ be numbers such that $0<a<b<c$, and $N=M\times(0,c]$.
Then we have a foliation on $N$  by submanifolds $M\times t$, $t\in(0,c]$.
Let also $\dif:N \to N$ be a $C^{\infty}$ leaf preserving diffeomorphism, i.e., $\dif(x,t)=(\phi(x,t),t)$ for some $C^{\infty}$ map $\phi:M \times (0,c]\to M$ such that for every $t\in(0,c]$ the map $\phi_{t}:M \to M$ is a diffeomorphism.
Then there exists a leaf preserving isotopy relatively to $M\times(0,a]$ of $\dif$ to a diffeomorphism $\hat\dif(x,t)=(\hat\phi(x,t),t)$ such that $\hat\phi_t=\phi_c$ for all $t\in[b,c]$.
\end{claim}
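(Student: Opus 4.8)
The plan is to leave $\dif$ untouched over the bottom collar $M\times(0,a]$ and to \emph{collapse} the path $t\mapsto\phi_t$ to the constant value $\phi_c$ over $[b,c]$, by feeding the $t$-variable of $\phi$ through a suitable reparametrization. The crucial observation is that a level preserving map of the form $(x,t)\mapsto(\psi_t(x),t)$ is a diffeomorphism of $N$ precisely when each $\psi_t\in\Diff(M)$ and the family $\psi_t$ is $C^{\infty}$ in $(x,t)$; there is \emph{no} compatibility condition relating distinct leaves. Hence for \emph{any} smooth map $\tau:(0,c]\to(0,c]$, not necessarily injective, the assignment $(x,t)\mapsto(\phi_{\tau(t)}(x),t)$ is again a leaf preserving diffeomorphism of $N$, with smooth inverse $(x,t)\mapsto(\phi_{\tau(t)}^{-1}(x),t)$, since the inverse of a smooth family of diffeomorphisms is again smooth. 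It is exactly this freedom, that $\tau$ may be non-injective, which allows us to flatten the path to a constant, something no reparametrization of $(0,c]$ by diffeomorphisms could do.

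First I would fix a smooth model function $g:(0,c]\to(0,c]$ with $g(t)=t$ for $t\le a$, $g(t)=c$ for $t\ge b$, and $0<g(t)\le c$ throughout; taking $g$ nondecreasing, such a $g$ is produced in the standard way from a cut-off function, and then $g(t)\ge a>0$ for $t>a$ while $g(t)=t>0$ for $t\le a$. Next I would interpolate linearly between the identity and $g$, setting
\[
\tau(t,s)=(1-s)\,t+s\,g(t),\qquad s\in I,
\]
which is $C^{\infty}$ in $(t,s)$. For each $s$ the number $\tau(t,s)$ is a convex combination of the two values $t,g(t)\in(0,c]$, hence again lies in $(0,c]$; moreover $\tau(t,0)=t$, $\tau(t,1)=g(t)$, and $\tau(t,s)=t$ whenever $t\le a$.

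Finally I would define the isotopy
\[
\dif_{s}(x,t)=\bigl(\phi_{\tau(t,s)}(x),\,t\bigr),\qquad s\in I,
\]
and verify the required properties. The family $\{\dif_s\}_{s\in I}$ is $C^{\infty}$ in $(x,t,s)$ because $\phi$ is $C^{\infty}$ on $M\times(0,c]$ and $\tau$ takes values in $(0,c]$; each $\dif_s$ is a leaf preserving diffeomorphism by the observation of the first paragraph; we have $\dif_0=\dif$ since $\tau(t,0)=t$; and the isotopy is relative to $M\times(0,a]$ because $\tau(t,s)=t$ there. At the end $s=1$ the diffeomorphism $\hat\dif(x,t)=(\phi_{g(t)}(x),t)$ has $\hat\phi_t=\phi_{g(t)}$, which equals $\phi_c$ for all $t\in[b,c]$, as required. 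I expect the only genuinely non-routine point to be the one isolated above, namely that collapsing $[b,c]$ forces $\tau(\cdot,s)$ to be non-injective and that this is harmless precisely because leaf preservation decouples the leaves; the remaining checks (joint smoothness, smoothness of the inverse, and $\tau$ staying inside $(0,c]$) are entirely standard.
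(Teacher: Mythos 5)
Your proposal is correct and is essentially identical to the paper's own proof: the paper chooses the same cut-off function $\mu:(0,c]\to(0,c]$ (your $g$) with $\mu(t)=t$ on $(0,a]$ and $\mu(t)=c$ on $[b,c]$, and defines the isotopy $H_s(x,t)=\bigl(\phi(x,(1-s)t+s\mu(t)),t\bigr)$, which is exactly your $\dif_s$ with $\tau(t,s)=(1-s)t+s\,g(t)$. The points you verify explicitly (joint smoothness, $\tau$ staying in $(0,c]$, the isotopy being relative to $M\times(0,a]$, and the harmlessness of $\tau(\cdot,s)$ being non-injective because leaf preservation decouples the leaves) are precisely what the paper leaves as ``easy to see.''
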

\begin{proof}
Let $\mu:(0,c]\to(0,c]$ be a $C^{\infty}$ function such that $\mu(t)=t$ for $t\in(0,a]$ and $\mu(t)=c$ for $t\in[b,c]$, see Figure~\ref{fig:func_mu_nu}a).
Define the following lead preserving isotopy $H:N\times I \to N$ by
$$
H_s(x,t) = (\phi(x, (1-s)t + s\mu(t)), t).
$$
Then it easy to see that $H_0=\id_{N}$, $H_t=\dif$ on $M\times(0,a]$ and $\hat\dif=H_1$ satisfies conditions of our claim.
\end{proof}

\begin{claim}\label{clm:2}
Let $d<e\in(0,1)$ and $G:M\times I\to M$ be a $C^{\infty}$ homotopy (isotopy).
Then there exists another $C^{\infty}$ homotopy (isotopy) $G':M\times I\to M$ such that $\hat G_{t}=G_0$ for $t\in[0,d]$ and $\hat G_{t}=G_1$ for $t\in[e,1]$.
\end{claim}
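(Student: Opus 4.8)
The plan is to obtain the required homotopy by reparametrizing the time variable, in exactly the same spirit as Claim~\ref{clm:1} but acting on the interval $I$ rather than on the leaves of a foliation. First I would fix a $C^{\infty}$ non-decreasing function $\nu:[0,1]\to[0,1]$ such that $\nu(t)=0$ for $t\in[0,d]$ and $\nu(t)=1$ for $t\in[e,1]$; such a $\nu$ exists by the standard bump-function construction (c.f.\ Figure~\ref{fig:func_mu_nu}b)). Then I would simply set
$$
\hat{G}(x,t) = G(x,\nu(t)), \qquad (x,t)\in M\times I.
$$

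The verification is then immediate. Since $\nu$ is smooth and $G$ is $C^{\infty}$, the composite $\hat{G}$ is $C^{\infty}$ on $M\times I$. For $t\in[0,d]$ we have $\nu(t)=0$, whence $\hat{G}_t=G_0$, and for $t\in[e,1]$ we have $\nu(t)=1$, whence $\hat{G}_t=G_1$, which are precisely the two required end conditions. If moreover $G$ is an isotopy, then for every $t$ the map $\hat{G}_t=G_{\nu(t)}$ coincides with one of the diffeomorphisms $G_s$ and is therefore itself a diffeomorphism, so $\hat{G}$ is again an isotopy; thus the homotopy and the isotopy cases are covered by the same formula.

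I expect essentially no obstacle here: this is a routine ``constant near the ends'' smoothing, and the only ingredient is the existence of the smooth monotone cut-off $\nu$, which is classical. The single point worth emphasizing is that, in contrast to Claim~\ref{clm:1}, the reparametrization is applied to the homotopy parameter $t$ and not to a coordinate on the manifold $M$; hence no fibre-wise structure is needed, and the construction automatically preserves whatever properties the individual maps $G_s$ already enjoy (in particular, being a diffeomorphism), which is what makes the isotopy case follow with no extra work.
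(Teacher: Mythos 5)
Your proposal is correct and is essentially identical to the paper's own proof: the paper likewise takes a $C^{\infty}$ function $\nu:I\to I$ with $\nu\equiv 0$ on $[0,d]$ and $\nu\equiv 1$ on $[e,1]$ and sets $\hat G_{t}=G_{\nu(t)}$. Your additional remarks (smoothness of the composite, the isotopy case being automatic since each $\hat G_t$ coincides with some $G_s$) just spell out what the paper leaves implicit.
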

\begin{proof}
Take any $C ^{\infty}$ function $\nu:I\to I$ such that $\nu[0,d]=0$ and $\nu[e,1]=1$, and put $\hat G_{t}=G_{\nu(t)}$, see Figure~\ref{fig:func_mu_nu}b).
\end{proof}
\begin{center}
\begin{figure}[ht]
\begin{tabular}{ccc}
\includegraphics[height=2.5cm]{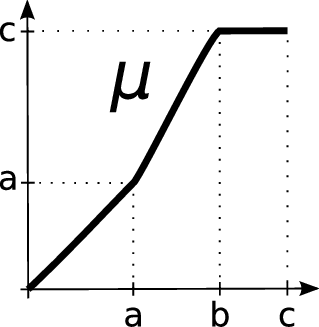}
& \qquad\qquad &
\includegraphics[height=2.5cm]{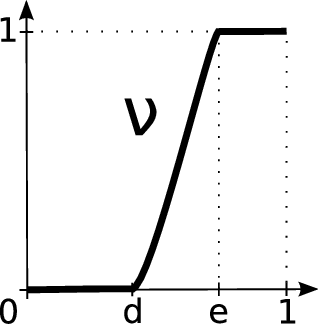} \\
a) & & b)
\end{tabular}
\caption{}\protect\label{fig:func_mu_nu}
\end{figure}
\end{center}

\begin{proof}[Proof of Theorem~\ref{th:Stabf-Bn}.]
Since $0$ is a unique critical point of $\amap$ and $\amap$ is constant on $S^{m-1}$, it follows from the arguments of the proof of Lemma~\ref{lm:cond_Jid} that there exists a diffeomorphism $\condif:D^{m}\setminus0\to S^{m-1}\times (0,1]$ such that $\amap\circ\condif^{-1}(y,t)=t$.
In particular, for every $t\in[0,1]$ the set $\amap^{-1}(t)$ is diffeomorphic with $S^{m-1}$.
Since $S^{m-1}$ is connected, we obtain that $\Dstrf = \Stabf$.
Therefore $\Didf{r} = \StabIdf{r}$ for all $r\in\NNi$.

By assumption there exists an invariant $\infty$-contraction of $\strucf$ to $0$ defined on some $\strucf$-invariant neighbourhood $\van$ of $0$.
Therefore we can assume that $\van=\amap^{-1}[0,2c]$ for some $c\in(0,\frac{1}{2})$.
\begin{lemma}\label{lm:isotopy_h_hpr}
There exists a $C^{\infty}$-isotopy of $\dif$ in $\Stab$ to a diffeomorphism $\hat\dif$ fixed on $\amap^{-1}[c,1]$.
Then is follows from Theorem~\ref{th:main} that $\hat\dif$ and therefore $\dif$ belong to $\Didf{0}=\StabIdf{0}$.
\end{lemma}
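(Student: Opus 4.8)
Given $\dif\in\Stab$, the plan is to carry out the whole construction in the collar coordinates $\condif:D^{m}\setminus0\to S^{m-1}\times(0,1]$ built at the beginning of the proof, in which $\amap\circ\condif^{-1}(y,t)=t$. There $\dif$ becomes a leaf-preserving diffeomorphism $\dif(y,t)=(\phi_{t}(y),t)$ with each $\phi_{t}\in\Diff(S^{m-1})$ and $\phi_{1}=\dif|_{S^{m-1}}$. Every isotopy produced below will be leaf-preserving, hence will preserve $\amap$ and remain in $\Stabf$; moreover its restriction to $S^{m-1}$ will be a path in $\Diff(S^{m-1})$ issuing from $\dif|_{S^{m-1}}$, so it will stay inside $\Stab$ throughout.

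First I would fix $0<a<b<c$ (the same small $c\in(0,\tfrac12)$ as above) and apply Claim~\ref{clm:1}, with the r\^ole of the top level played by $t=1$, to obtain a leaf-preserving isotopy that is stationary on $\amap^{-1}(0,a]$ and carries $\dif$ to a diffeomorphism $\dif_{1}$ with $\dif_{1}(y,t)=(\phi_{1}(y),t)$ for all $t\in[b,1]$. Since this isotopy coincides with $\dif$ on $\amap^{-1}(0,a]$, it extends smoothly across the origin (by $\dif(0)=0$) to an isotopy of $B^{m}$, even though $\condif$ is defined only off $0$. Thus on the outer collar $\dif_{1}$ is just the suspension of the single diffeomorphism $\phi_{1}=\dif|_{S^{m-1}}$.

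The second step is where the defining hypothesis of $\Stab$ enters: $\phi_{1}$ is $C^{\infty}$-isotopic to $\id_{S^{m-1}}$. I would choose such an isotopy $g:S^{m-1}\times I\to S^{m-1}$ with $g_{0}=\phi_{1}$, $g_{1}=\id$, and, using Claim~\ref{clm:2}, arrange $g_{s}\equiv\phi_{1}$ near $s=0$ and $g_{s}\equiv\id$ near $s=1$. Fixing a smooth monotone $\sigma:[b,1]\to[0,1]$ with $\sigma\equiv0$ near $t=b$ and $\sigma\equiv1$ on $[c,1]$, define on the collar the family
$$
\homot_{u}(y,t)=(g_{u\,\sigma(t)}(y),t),\qquad u\in I,
$$
and extend it by $\dif_{1}$ on $\amap^{-1}[0,b]$. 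Near $t=b$ one has $\sigma=0$, so $\homot_{u}=(\phi_{1}(y),t)=\dif_{1}$ there for every $u$; this matching makes the extension a genuine smooth leaf-preserving isotopy. At $u=0$ it equals $\dif_{1}$, and at $u=1$ it is a diffeomorphism $\hat\dif$ which agrees with $\dif_{1}$ near $t=b$ and equals $\id$ on $\amap^{-1}[c,1]$. Concatenating the two isotopies yields the asserted $C^{\infty}$-isotopy of $\dif$ in $\Stab$ to a diffeomorphism $\hat\dif$ fixed on $\amap^{-1}[c,1]$.

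Finally, $\hat\dif$ is fixed outside the neighbourhood $U=\amap^{-1}[0,c)$ of $0$, whose closure $\amap^{-1}[0,c]$ lies in $\Int\van$ for $\van=\amap^{-1}[0,2c]$, and by hypothesis $\strucf$ admits an invariant $\infty$-contraction to $0$ on $\van$. Hence Theorem~\ref{th:main} applies and gives a $0$-homotopy of $\hat\dif$ to the identity inside $\End(\strucf,\van)$; since $\hat\dif$ is a diffeomorphism equal to $\id$ near $\partial\van$, the homotopy $\con_{t}\circ\hat\dif\circ\con_{t}^{-1}$ (glued with the identity outside $\van$) consists of diffeomorphisms, so in fact $\hat\dif\in\Didf{0}=\StabIdf{0}$. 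As $\dif$ is isotopic to $\hat\dif$ within $\Stab\subset\Stabf$, it follows that $\dif\in\StabIdf{0}$ as well. The only real difficulty I anticipate is purely technical $C^{\infty}$-smoothness at the seams --- across $t=b$, across the origin, and at the two ends $s=0,1$ of the inserted isotopy --- which is precisely what the flat reparametrizations $\mu$ and $\nu$ of Claims~\ref{clm:1} and~\ref{clm:2} (and the resulting $\sigma$) are chosen to guarantee; the one conceptual point, that the outer collar can be untwisted at all, rests on $\dif|_{S^{m-1}}\simeq\id$, while the potentially $C^{1}$-obstructed part of $\dif$ is compressed into $\amap^{-1}[0,c]$ and handed to Theorem~\ref{th:main}.
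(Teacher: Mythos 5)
Your proposal is correct and takes essentially the same approach as the paper: first untwist the collar via Claim~\ref{clm:1} so that $\dif$ becomes the suspension of $\dif|_{S^{m-1}}$ on the outer levels, then use the defining hypothesis $\dif|_{S^{m-1}}\simeq\id_{S^{m-1}}$ of $\Stab$ to isotope (leaf-preservingly, rel the inner part) to a diffeomorphism $\hat\dif$ fixed on $\amap^{-1}[c,1]$, and finally hand $\hat\dif$ to Theorem~\ref{th:main}. The only difference is cosmetic: the paper realizes the second step by gluing $g$ with the boundary isotopy $G$ on an extended collar $S^{m-1}\times(0,2]$ and then reparametrizing levels, whereas you insert the isotopy directly by the sweep $\homot_{u}(y,t)=(g_{u\,\sigma(t)}(y),t)$ — after unwinding, these are the same construction.
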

\begin{proof}
Since $\dif$ preserves $\amap$, it follows that the following diffeomorphism
$$g=\condif\circ\dif\circ\condif^{-1}:S^{m-1}\times(0,1] \to S^{m-1}\times(0,1]$$
is leaf preserving, i.e. $\dif(S^{m-1}\times t)=S^{m-1}\times t$ for all $t\in(0,1]$.
Then by Claim~\ref{clm:1} we can assume that $g|_{S^{m-1}\times t}=h|_{S^{m-1}}$ for all $t\in[0.5,1]$.

Take any $a\in(0,c)$.
It suffices to find a leaf preserving isotopy relatively to $S^{m-1}\times (0,a]$ of $g$ to a diffeomorphism $\hat g$ fixed on $S^{m-1}\times[c,1]$.
This isotopy will yield an isotopy relatively to $\amap^{-1}[0,a]$ of $\dif$ in $\Stab$ to a diffeomorphism $\hat\dif$ which is fixed on $\amap^{-1}[c,1]$.

By assumptions of our theorem there exists a $C^{\infty}$ isotopy $G:S^{m-1}\times [1,2] \to S^{m-1}$  such that $G_{1}=\dif|_{S^{m-1}}$ and $G_{2}=\id_{S^{m-1}}$.
By Claim~\ref{clm:2} we can assume that $G_{t}=\dif|_{S^{m-1}}$ for all $t\in[1,1.5]$.
Hence $g$ and $G$ yield the following $C^{\infty}$ leaf preserving diffeomorphism 
$$
T:S^{m-1}\times(0,2]\to S^{m-1}\times(0,2],
\qquad
T(y,s) = \left\{ 
\begin{array}{cc}
 g(y,s), & s\in(0,1] \\
G(y,s), & s\in[1,2].
\end{array}
\right.
$$

Notice that $T(y,2)=G(y,2)=y$.
Then by Claim~\ref{clm:2} $T$ is isotopic via a leaf preserving isotopy relatively $S^{m-1}\times(0,a]$ to a diffeomorphism $\hat T$ which is fixed on $S^{m-1}\times(c,2]$.
Denote $\hat g=\hat T|_{S^{m-1}\times(0,1]}$.
The restriction of this isotopy to $S^{m-1}\times(0,1]$ gives a leaf preserving isotopy relatively $S^{m-1}\times (0,a]$ of $g$ to a diffeomorphism $\hat g$ with desired properties.
The construction of homotopy is schematically presented in Figure~\ref{fig:homotopy}.
\end{proof}
\begin{figure}[ht]
\includegraphics[height=5cm]{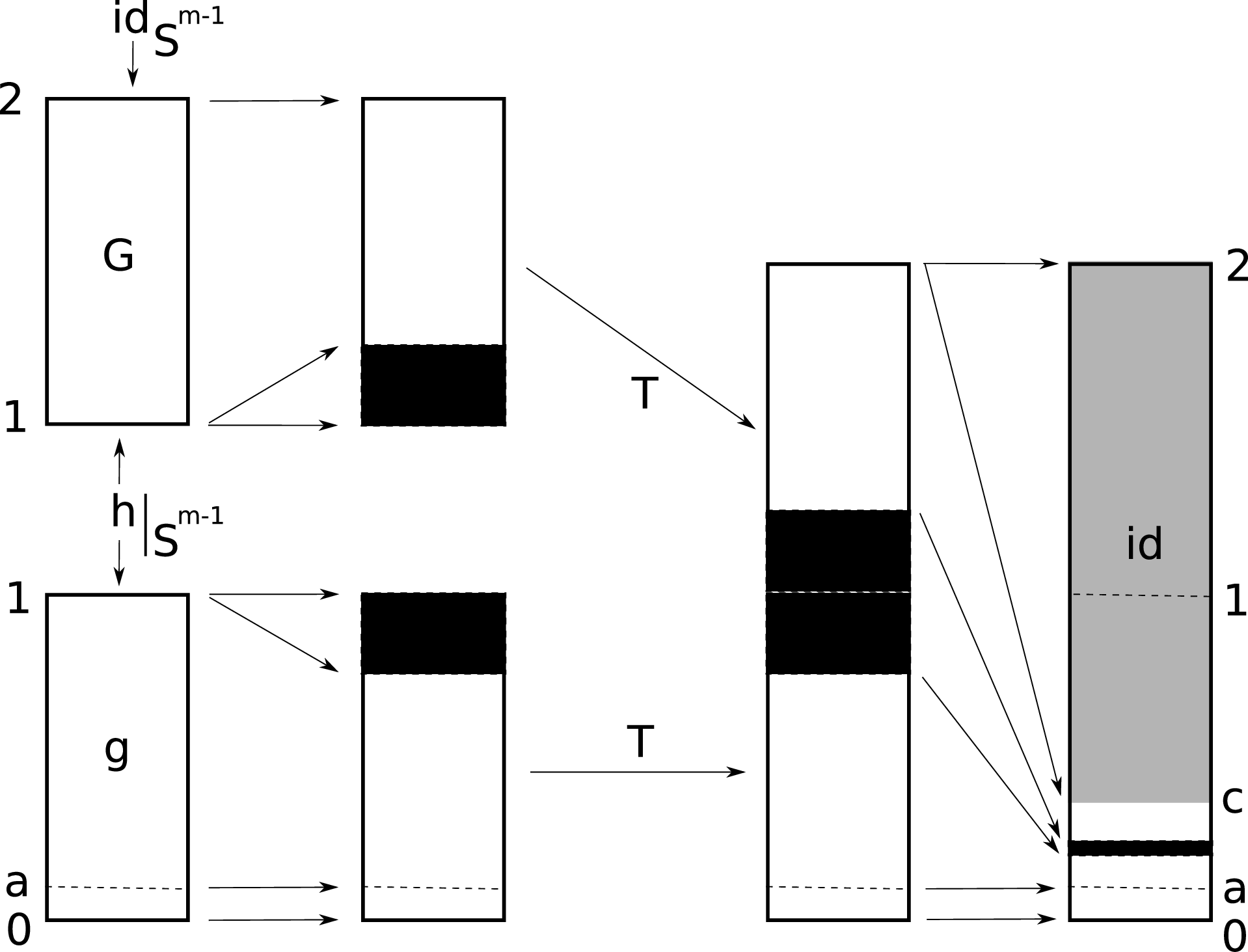}
\caption{}\protect\label{fig:homotopy}
\end{figure}

Suppose now that $m=2,3,4$.
Then every orientation preserving diffeomorphism of $S^{m-1}$ is $C^{\infty}$-isotopic to $\id_{S^{m-1}}$, whence $\Stab=\Stabfpl$, and therefore $\Stab=\StabIdf{0}=\Stabfpl$.
For $m=2$ this is rather trivial, for $m=3$ is proved by S.~Smale~\cite{Smale}, 
and for $m=4$ by A.~Hatcher~\cite{Hatcher}.

If $m\geq5$, then $\Diff^{+}(S^{m-1})$ is not connected in general, see e.g.~\cite{Novikov}, and therefore Theorem~\ref{th:main} is not applicable.
\end{proof}

\section{Linear symmetries of homogeneous polynomials}\label{sect:lin_sym}
Let $\amap:\RRR^2\to\RRR$ be a homogeneous polynomial of degree $p\geq2$ given by~\eqref{equ:homog_poly}
$$ 
\amap(x,y) = \pm \prod_{i=1}^{l} L_i^{\alpha_i}(x,y) \cdot  \prod_{j=1}^{k} Q_j^{\beta_j}(x,y).
$$
Denote 
$$\LStabf=\Stabfpl\cap\GLPR.$$
Thus $\LStabf$ consists of preserving orientation linear automorphisms $\dif:\RRR^2\to\RRR^2$ such that $\amap\circ\dif=\amap$.
Also notice that $\LStabf$ is a closed subgroup of $\GLPR$, and therefore it is a Lie group.
Denote by $\LStabf_{0}$ the connected component of the unit matrix $\id_{\RRR^2}$ in $\LStabf$.

In this section we recall the structure of $\LStabf$.
Notice that we may make linear changes of coordinates to reduce $\amap$ to a convenient form.
Then $\LStabf$ will change to a conjugate subgroup in $\GLPR$.

\begin{lemma}\label{lm:LStabf_nontriv}
If $\deg\amap$ is even, then $\amap(-z)\equiv\amap(z)$, i.e. $-\id_{\RRR^2}\in\LStabf$.
Therefore in this case $\LStabf$ is a non-trivial group.
\hfill
\qed
\end{lemma}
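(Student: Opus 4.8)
The plan is to exhibit one explicit nontrivial element of $\LStabf$, namely the central symmetry $h=-\id_{\RRR^2}$ (the linear map $z\mapsto -z$), and to verify that it actually lies in the group. Since $\LStabf=\Stabfpl\cap\GLPR$ is defined as a set of orientation-preserving \emph{linear} symmetries of $\amap$, producing a single such symmetry different from $\id_{\RRR^2}$ suffices to conclude nontriviality.

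The only computational input is the scaling property of homogeneous polynomials. Because $\amap$ is homogeneous of degree $p$, every monomial occurring in $\amap$ has total degree $p$, so $\amap(tz)=t^{p}\amap(z)$ for \emph{every} real scalar $t\in\RRR$ (not merely for $t>0$) and every $z\in\RRR^2$. Applying this identity with $t=-1$ gives $\amap(-z)=(-1)^{p}\amap(z)$. Under the hypothesis that $p=\deg\amap$ is even we have $(-1)^{p}=1$, hence $\amap(-z)=\amap(z)$ for all $z$; equivalently $\amap\circ h=\amap$, so $h\in\Stabf$.

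It then remains to locate $h$ inside $\LStabf$ and to check it is nontrivial. The map $h$ is linear with matrix $-I$, whose determinant equals $(-1)^2=1>0$; therefore $h\in\GLPR$ and in particular $h$ is orientation-preserving, so $h\in\Stabfpl$. Combining these, $h\in\Stabfpl\cap\GLPR=\LStabf$. Finally $h=-\id_{\RRR^2}\neq\id_{\RRR^2}$, so $\LStabf$ contains an element distinct from the identity and is a nontrivial group, as claimed.

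I expect no genuine obstacle here: the argument is immediate once the homogeneity identity is invoked. The only two points deserving a moment's care are that the scaling relation $\amap(tz)=t^{p}\amap(z)$ is valid for all real $t$, including $t=-1$ (which is exactly what forces the parity of $p$ to enter), and that in even ambient dimension $-\id$ has positive determinant, so that it genuinely belongs to $\GLPR$ rather than merely to $\GL(2,\RRR)$.
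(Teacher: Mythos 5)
Your proof is correct and is exactly the argument the paper intends: the lemma is stated with an immediate \qed (no written proof), the point being precisely that homogeneity gives $\amap(-z)=(-1)^{p}\amap(z)=\amap(z)$ for even $p$, while $\det(-\id_{\RRR^2})=1>0$ places $-\id_{\RRR^2}$ in $\GLPR$, hence in $\LStabf$. Your two cautionary remarks (validity of the scaling identity for all real $t$, and positivity of the determinant in even dimension) are the only points of substance, and you handle both correctly.
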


We will distinguish the following five cases of $\amap$.

(A) $l=1$, $k=0$, $\amap=L_1^{\alpha_1}$.
By linear change of coordinates we can assume that $L_1(x,y)=y$ and thus
$\amap(x,y)=y^{\alpha_1}$.
Then
$$
\LStab_{0}(y^{\alpha_1}) = 
\left\{ \left(\begin{smallmatrix} a & 0 \\ b & 1  \end{smallmatrix} \right) \  : \ a>0
 \right\}.
$$
If $\alpha_1$ is odd then $\LStab_{0}=\LStabf$, otherwise, $\LStabf$ consists of two connected components $\LStab_{0}$ and $-\LStab_{0}$.

(B) $l=2$, $k=0$, $\amap=L_1^{\alpha_1}\,L_2^{\alpha_2}$.
By linear change of coordinates we can assume that $L_1(x,y)=x$, $L_2(x,y)=y$ and thus
$\amap(x,y)=x^{\alpha_1}\,y^{\alpha_2}$.
Then
$$
\LStab_{0}(x^{\alpha_1}\,y^{\alpha_2}) = \left\{
\left(\begin{smallmatrix} e^{\alpha_2\,t} & 0 \\ 0 & e^{-\alpha_1\,t}\end{smallmatrix}\right) \ : \ t\in\RRR
\right\}.
$$
Moreover, $\LStabf/\LStab_{0}$ is isomorphic with some subgroup of $\ZZZ_{4}$ generated by the rotation of $\RRR^2$ by $\pi/2$.

(C) $l=0$, $k=1$, $\amap=Q_1^{\beta_1}$.
By linear change of coordinates we can assume that $Q_1(x,y)=x^2+y^2$, whence 
$\amap(x,y)=(x^2+y^2)^{\beta_1}$.
Then
$$
\LStab(x^2+y^2) = SO(2,\RRR)=\left\{
\left(\begin{smallmatrix} \cos t & \sin t \\ -\sin t & \cos t\end{smallmatrix}\right) \ : \ t\in[0,2\pi)
\right\}.
$$

The above statements are elementary and we left them for the reader.
Notice also that in the cases (A)-(C) $l+2k\leq 2$.
The remaining two cases are the following:

(D) $l=0$, $k\geq2$, $\amap=Q_1^{\beta_1}\cdots Q_k^{\beta_k}$.
In this case $\deg\amap$ is even, whence $\LStabf$ is non-trivial.

(E) $l\geq1$, $l+2k\geq3$.

\begin{lemma}\label{lm:LStabf}
In the cases {\rm(D)} and {\rm(E)} $\LStabf$ is a finite cyclic subgroup of $\GLPR$.
Moreover, in the case {\rm(E)} $\LStabf$ is a subgroup of $\ZZZ_{2l}$.
\end{lemma}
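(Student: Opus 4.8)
The plan is to realise $\LStabf$ inside $\mathrm{PSL}(2,\RRR)=\mathrm{Isom}^{+}(\mathbb{H})$ acting on the configuration of roots of $\amap$, to deduce finiteness from a faithful action on that finite configuration, to conclude cyclicity by averaging an inner product, and finally to read off the divisibility by $2l$ in case (E) from the induced cyclic action on the $2l$ real root directions.

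First I would observe that $\LStabf$ contains no non-trivial positive homothety: if $c\,\id_{\RRR^2}\in\LStabf$ with $c>0$ then $\amap=\amap\circ(c\,\id)=c^{p}\amap$ forces $c=1$. Since the kernel of $\GLPR\to\mathrm{PGL}^{+}(2,\RRR)=\mathrm{PSL}(2,\RRR)$ is exactly $\RRR_{>0}\cdot\id$, this gives a faithful embedding $\LStabf\hookrightarrow\mathrm{PSL}(2,\RRR)=\mathrm{Isom}^{+}(\mathbb{H})$. Next I would record the root configuration of $\amap$ in $\CCC P^{1}$: the factors $L_1,\dots,L_l$ give $l$ distinct points of $\RRR P^{1}=\partial\mathbb{H}$, while the factors $Q_1,\dots,Q_k$ give $k$ distinct points in the interior $\mathbb{H}$ (one root from each conjugate pair). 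Since $\amap\circ\dif=\amap$, every $\dif\in\LStabf$ permutes this finite set of $l+k$ points, and, being real and orientation preserving, it permutes the $l$ boundary points among themselves and the $k$ interior points among themselves. This yields a homomorphism $\LStabf\to\Sym{l}\times\Sym{k}$.

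The key step is that this homomorphism is injective in both cases (D) and (E), which immediately makes $\LStabf$ finite. Its kernel consists of isometries of $\mathbb{H}$ fixing all $l+k$ marked points, and I would rule these out using the standard rigidity of $\mathrm{Isom}^{+}(\mathbb{H})$: an orientation preserving isometry fixing two interior points, or one interior and one boundary point, or three boundary points, is the identity. In case (D) we have $k\ge2$ interior points; in case (E) either $l\ge1$ and $k\ge1$ (so one interior and one boundary point are fixed) or $l\ge3$ and $k=0$ (three boundary points), and these exhaust (E) because $l+2k\ge3$ with $l\le2$ forces $k\ge1$. Hence the kernel is trivial and $\LStabf$ is finite. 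Being a finite subgroup of $\GLPR$ it preserves the averaged inner product $\sum_{g\in\LStabf}\langle g\,\cdot,\,g\,\cdot\rangle$, so after a linear change of coordinates $\LStabf\subset SO(2)$; and a finite subgroup of $SO(2)\cong S^{1}$ is cyclic. This proves the first assertion.

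For the refinement in case (E) I would use the action on directions rather than on lines. Each line $\{L_i=0\}$ meets the unit circle $S^{1}$ of directions in two antipodal points, giving $2l$ marked points on $S^{1}$; since $\det\dif>0$, $\dif$ acts on $S^{1}$ as an orientation preserving homeomorphism permuting these $2l$ points, and such a homeomorphism respects their cyclic order, so the induced permutation is a power of the $2l$-cycle. This defines a homomorphism $\chi:\LStabf\to\ZZZ_{2l}$. It remains to see that $\chi$ is injective: if $\chi(\dif)=\id$ then $\dif$ fixes one of the $2l$ directions, i.e.\ has an eigenvector with positive eigenvalue; but $\dif$ has finite order, and a non-identity finite order element of $\GLPR$ is conjugate to a non-trivial rotation, so (fixing a direction being conjugation invariant) it fixes no direction, forcing $\dif=\id$. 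Therefore $\LStabf\cong\mathrm{im}\,\chi$ is a subgroup of $\ZZZ_{2l}$. The main obstacle, and the only place the hypothesis $l+2k\ge3$ is genuinely used, is securing finiteness in the small cases $l=1,2$: there the real roots alone do not pin down an isometry, and one must invoke the interior quadratic roots to obtain a second fixed point and apply the rigidity of $\mathrm{Isom}^{+}(\mathbb{H})$.
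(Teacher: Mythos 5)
Your approach (root configurations in $\CCC P^{1}$, rigidity of isometries of the hyperbolic plane $\mathbb{H}$, averaging an inner product, action on rays) is genuinely different from the paper's elementary computation with eigendirections and quadratic forms, but it contains one genuine error at the decisive finiteness step. The kernel of the projection $\GLPR\to\mathrm{PGL}^{+}(2,\RRR)\cong\mathrm{PSL}(2,\RRR)$ is \emph{not} $\RRR_{>0}\cdot\id_{\RRR^2}$: it consists of \emph{all} nonzero scalar matrices $c\cdot\id_{\RRR^2}$, since $\det(c\cdot\id_{\RRR^2})=c^{2}>0$ for every $c\not=0$. In particular $-\id_{\RRR^2}$ lies in $\GLPR$ and acts trivially on $\CCC P^{1}$. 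By Lemma~\ref{lm:LStabf_nontriv}, $-\id_{\RRR^2}\in\LStabf$ whenever $\deg\amap$ is even, which always holds in case (D) and can hold in case (E) (e.g. $\amap=L_1L_2Q_1$). Hence your claimed faithful embedding $\LStabf\hookrightarrow\mathrm{PSL}(2,\RRR)$ does not exist in case (D), and your key claim that $\LStabf\to\Sym{l}\times\Sym{k}$ is injective is false there: $-\id_{\RRR^2}$ fixes every marked point and so lies in the kernel. The assertion ``hence the kernel is trivial'' fails precisely in the case the theorem cares most about.

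The error is local and repairable, and the repair is exactly what the paper proves. Your rigidity argument (correctly split into the subcases $k\geq2$; $l\geq1$ and $k\geq1$; $l\geq3$ and $k=0$, which do exhaust (D) and (E)) shows only that every element of $\ker\bigl(\mu:\LStabf\to\Sym{l}\times\Sym{k}\bigr)$ projects to the identity of $\mathrm{PSL}(2,\RRR)$, hence is a scalar matrix $c\cdot\id_{\RRR^2}$; homogeneity then gives $c^{\deg\amap}=1$, so $c=\pm1$ --- note your scalar computation treated only $c>0$ and must be extended to negative $c$. Thus $\ker\mu\subset\{\pm\id_{\RRR^2}\}$, which is still finite, so $\LStabf$ is finite and your averaging argument gives conjugacy into $SO(2)$ and cyclicity as before; this corrected statement is exactly the paper's Claim~\ref{clm:t_id_in_LStabf} together with Claim~\ref{clm:LStabf_finite}. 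With finiteness restored, your case (E) argument is correct and unaffected by the error: the homomorphism $\chi:\LStabf\to\ZZZ_{2l}$ given by the action on the $2l$ rays is injective, because a nontrivial finite-order element of $\GLPR$ is conjugate in $\GL(2,\RRR)$ to a nontrivial rotation and therefore fixes no ray (in particular $\chi(-\id_{\RRR^2})$ is the antipodal permutation, not the identity, so $-\id_{\RRR^2}$ causes no trouble at this stage). This is essentially equivalent to the paper's Claim~\ref{clm:LStabf_Zl}, where the generator is shown to shift the $2l$ rays of $\amap^{-1}(0)$ freely; your version has the small advantage of not needing to normalize the generator as a rotation.
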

\begin{proof}
In fact the cyclicity of $\LStabf$ for the case $l+2k-1\geq 2$ can be extracted from the paper of W.\;C.\;Huffman, \cite{Huffman}, where the symmetries of complex binary forms are classified.
Regard $\amap:\CCC^2\to\CCC$ as a complex polynomial with real coefficients.
Then by~\cite{Huffman} the subgroup $\LStab_{\CCC}(\amap)$ of $GL(2,\CCC)$ consisting of \emph{complex} symmetries of $\amap$ turned out to be of one of the following types: cyclic, dihedral, tetrahedral, octahedral, and icosahedral.
Notice $\LStabf$ is the subgroup of $\LStab_{\CCC}(\amap)$ 
consisting of \emph{preserving orientation real} symmetries of $\amap$, i.e. automorphisms which also leave invariant $2$-plane $\RRR^2\subset\CCC^2$ of real coordinates and preserve its orientation.
Then it follows from the structure of symmetries of regular polyhedrons, that $\LStabf$ must be cyclic.

Nevertheless, since we need a very particular case of~\cite{Huffman} and for the sake of completeness, we will present a short elementary proof.
It suffices to show that $\LStabf$ is finite, see~\ref{clm:LStabf_finite}.
This will imply that $\LStabf$ is isomorphic with a finite subgroup of $SO(2)$, and therefore is cyclic.
Also notice that the fact that $\LStabf$ is discrete also follows from~\cite{Maks:jets}. 
First we establish the following three statements:
\begin{claim}\label{clm:Qh_tQ_t_deth}
Let $\dif\in\GLPR$ and $Q$ be a positive definite quadratic form such that $Q\circ\dif=t\,Q$ for some $t>0$.
Then $t=\det(h)$.
\end{claim}
\begin{proof}
By linear change of coordinates we can assume that $Q(z)=|z|^2$.
Then $\dif(z) = \sqrt{t} e^{i\psi}z$ for some $\psi\in\RRR$, hence $\det(\dif)=t$.
\end{proof}

\begin{claim}\label{clm:Q1Q2h}
Let $Q_1, Q_2$ be a positive definite quadratic form such that $\frac{Q_1}{Q_2}\not\equiv\mathrm{const}$.
Let also $\dif\in\GLPR$ be such that $Q_i\circ\dif=t Q_i$ for $i=1,2$, where $t=\det(h)$.
Then  $h(z)=\pm \sqrt{t}\,z$.
\end{claim}
\begin{proof}
We can assume that $Q_1(x,y)=x^2+y^2$ and $Q_2(x,y)=ax^2 + by^2$, where $a,b>0$ and either $a\not=1$ or $b\not=1$.
Denote $g(z) = \dif(z)/\sqrt{t}$.
Then $Q_i\circ g =Q_i$, i.e. $g$ preserves every circle $x^2+y^2=\mathrm{const}$ and every ellipse $ax^2 + by^2=\mathrm{const}$.
Therefore $g=\pm\id_{\RRR^2}$, and $\dif(z) = \pm\sqrt{t} z$.
\end{proof}

\begin{claim}\label{clm:t_id_in_LStabf}
If \ $t\cdot\id_{\RRR^2}\in\LStabf$ for some $t\in\RRR$, then $t=\pm1$.
\end{claim}
\begin{proof}
Let $z\in\RRR^2$ be such that $\amap(z)\not=0$.
Since $\amap$ is homogeneous, we have 
$\amap(z)=\amap(tz) = t^{\deg \amap}\amap(z),$
whence $t=\pm1$.
\end{proof}

Let $\dif\in\LStabf$.
Since $L_i$ and $Q_j$ are irreducible over $\RRR$, so are $L_i\circ \dif$ and $Q_j\circ\dif$.
Therefore the identity $\amap\circ\dif=\amap$ implies that ``$\dif$ permutes $L_i$ and $Q_j$ up to non-zero multiples''.
This means that for every $i$ there exist $i'$ and $s_i\in\RRR\setminus\{0\}$, and for every $j$ there exist $j'$ and $t_j>0$ such that 
$$
L_i(\dif(z)) = s_i\,L_{i'}(z),
\qquad
Q_j(\dif(z)) = t_j\, Q_{j'}(z).
$$
Denote by $\Sym{r}$ the group of permutations of $r$ symbols.
Then we have a well-defined homomorphism
$$
\mu:\LStabf \to \Sym{l} \times \Sym{k}
$$
associating to every $\dif\in\LStab$ its permutations of $L_i$ and $Q_j$.

\begin{claim}\label{clm:LStabf_finite}
If $l+2k\geq3$, then $\ker\mu\subset\{\pm\id_{\RRR^2}\}$, whence $\LStabf$ is a finite group.
\end{claim}
\begin{proof}
Let $\dif\in\ker\mu$.
Thus $L_i\circ\dif=s_i L_i$ and $Q_j\circ\dif=t_j Q_j$ for all $i,j$.
We will show that $\dif=t\cdot\id_{\RRR^2}$ for some $t\not=0$.
Then it will follow from Claim~\ref{clm:t_id_in_LStabf} $\dif=\pm\id$.

Notice that $\dif$ preserves every line $\{L_i=0\}$ and thus has $l$ distinct eigen directions.

a) Therefore if $l\geq3$, then $\dif=t\cdot\id_{\RRR^2}$ for some $t\in\RRR$.

b) Moreover, if $k\geq2$, then by Claim~\ref{clm:Q1Q2h}, we also have $\dif=\pm\id_{\RRR^2}$.

c) Suppose that $1\leq l \leq 2$ and $k=1$.
We can assume that $Q(z)=|z|^2$ and that $\dif(z)= t\, e^{i\psi}z$ for some $t>0$ and $\psi\in\RRR$.
Since $\dif$ has $l\geq1$ eigen directions, we obtain that $\dif(z)=\pm \sqrt{t} z$.
\end{proof}

Thus $\LStabf \approx \ZZZ_{n}$ for some $n\in\NNN$.
Let $\dif$ be a generator of $\LStabf$.
Then we can assume that $\dif(z) = e^{2\pi i/n}z$.
It remains to prove the latter statement.
\begin{claim}\label{clm:LStabf_Zl}
Suppose that $l\geq1$.
Then $n$ divides $2l$, whence $\LStabf$ is isomorphic with a subgroup of $\ZZZ_{2l}$.
\end{claim}
\begin{proof}
Since $\amap\circ\dif=\amap$, it follows that $\dif(\amap^{-1}(0))=\amap^{-1}(0)$.
By assumption $l\geq1$, whence $\amap^{-1}(0)=\mathop\cup\limits_{i=1}^{l}\{L_i=0\}$ is the union of $l$ lines passing through the origin.
This set can be viewed as the union of $2l$ rays starting at the origin, and these rays are cyclically shifted by $\dif$.
Moreover, if $\dif^{t}$ preserves at least one of these rays, then $\dif^{t}=\id_{\RRR^2}$.
Therefore $n$ divides $2l$.
\end{proof}
Lemma~\ref{lm:LStabf} is completed.
\end{proof}

\section{Proof of Theorem~\ref{th:Stabf1_not_Stabf0}.}\label{sect:proof-th:Stabf1_not_Stabf0}
Let $\amap:\RRR^2\to\RRR$ be a homogeneous polynomial of degree $p\geq1$ given by~\eqref{equ:homog_poly}
$$ 
\amap(x,y) = \pm \prod_{i=1}^{l} L_i^{\alpha_i}(x,y) \cdot  \prod_{j=1}^{k} Q_j^{\beta_j}(x,y).
$$
We will refer to the cases (A)-(E) of $\amap$ considered in the previous section.
We have to show that $\StabIdf{\infty}=\cdots=\StabIdf{1}$ and that $\StabIdf{1}\not=\StabIdf{0}$ iff $\amap$ is of the case (D).

Our first aim is to identify $\StabIdf{r}$ with the group $\DidhFld{r}$ for some vector field $\hFld$ on $\RRR^2$, see Lemma~\ref{lm:DhF_Sf}.
Then we will use the shift map of $\hFld$.
Denote 
$$
D = \pm \prod_{i=1}^{l} L_i^{\alpha_i-1} \cdot  \prod_{j=1}^{k} Q_j^{\beta_j-1}.
$$
Then 
$$
\amap=L_1 \cdots L_{l} \cdot Q_{1} \cdots Q_{q} \cdot D
$$
and it is easy to see that $D$ is the greatest common divisor of $\amap'_{x}$ and $\amap'_{y}$ in the ring $\RRR[x,y]$.

Let $\Fld=-\amap'_y\;\frac{\partial}{\partial x} \;+\; \amap'_x\;\frac{\partial}{\partial x}$ be the Hamiltonian vector field of $\amap$ on $\RRR^2$ and
$$
\hFld=\Fld/D = -(\amap'_y/D)\;\frac{\partial}{\partial x} \;+\; (\amap'_x/D)\;\frac{\partial}{\partial x}.
$$
We will call $\hFld$ the \emph{reduced\/} Hamiltonian vector field of $\amap$.
Notice that $$\deg\hFld=l+2k-1$$
and the coordinate functions of $\hFld$ are relatively prime in $\RRR[x,y]$.

As noted in Example~\ref{exmp:part-func-Fld} the singular partitions $\strucf$ and $\strucFld$ coincide.
Let us describe the singular partition $\struchFld=(\partithFld,\singhFld)$.
Recall that elements of $\partithFld$ are the orbits of $\hFld$ and $\singhFld$ consists of zeros of $\hFld$.
Consider the following cases, see Figure~\ref{fig:cases}.

(A) $\amap=y^{\alpha_1}$.
Then $D=y^{\alpha_1-1}$ and $\Fld(x,y)=\alpha_1 y^{\alpha_1-1}\frac{\partial}{\partial y}$.
Hence $\hFld(x,y)=\alpha_1\frac{\partial}{\partial y}$ is a constant vector field and the partition $\struchFld$ consists of horizontal lines $\{y=\mathrm{const}\}$ being non-singular elements of $\struchFld$.

(C) and (D) $\amap=Q_1^{\beta_1} \cdots Q_k^{\beta_k}$.
In this case $\strucFld=\struchFld$.
The origin is a unique singular element of $\struchFld$.
All other elements of $\struchFld$ are level-sets $\amap^{-1}(c)$ of $\amap$ for $c>0$.

(B) and (E) either $l=2$ and $k=0$ or $l\geq1$ and $k\geq1$.
In both cases the set of singular points of $\strucFld$ consist of the origin and the set 
$$
D^{-1}(0) = \mathop\cup\limits_{i \ : \ \alpha_i\geq2}  \{ L_i=0\}
$$
of zeros of $D$ being the union of those lines $\{L_i=0\}$ for which $L_i$ is a multiple factor of $\amap$.
Since after division of $\Fld$ by $D$ the coordinate functions of $\hFld=\Fld/D$ are relatively prime, it follows that $0$ is a unique singular element of $\struchFld$.
Hence non-singular elements of $\struchFld$ are the connected components $\amap^{-1}(c)$ for $c\not=0$ and the half-lines in $\amap^{-1}(0)\setminus0$, see Figure~\ref{fig:partit}.

\begin{center}
\begin{figure}[ht]
\begin{tabular}{ccccccc}
\includegraphics[height=1.5cm]{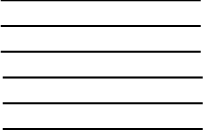} & \qquad & 
\includegraphics[height=1.5cm]{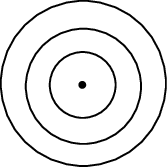} & \qquad & 
\includegraphics[height=1.5cm]{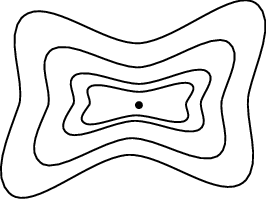} & \qquad & 
\includegraphics[height=1.5cm]{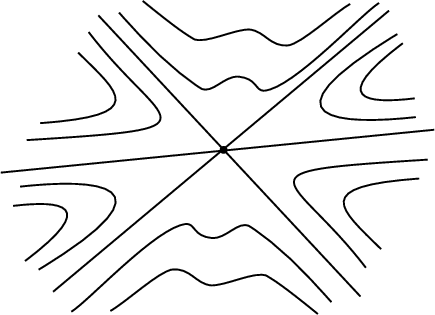}\\
Case (A) & & Case (C) & & Cases (D) & & Cases (B) and (E)
\end{tabular}
\caption{}\protect\label{fig:cases}
\end{figure}
\begin{figure}[ht]
\begin{tabular}{ccc}
\includegraphics[height=1.5cm]{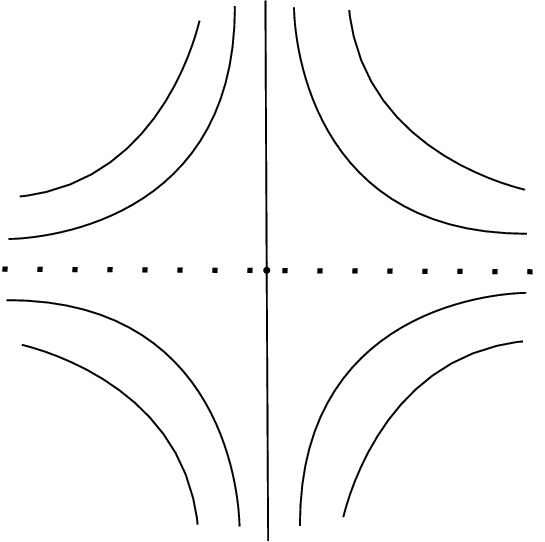} & \ \qquad \ & \includegraphics[height=1.5cm]{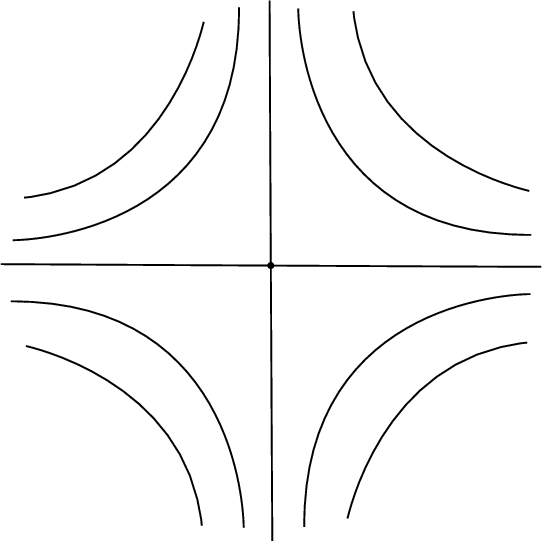} \\
a) $\Fld(x,y)=-2xy \frac{\partial}{\partial x}+y^2 \frac{\partial}{\partial y}$ & &
b) $\hFld(x,y)=-2x \frac{\partial}{\partial x}+y \frac{\partial}{\partial y}$ \\
\end{tabular}
\caption{Case (B). Hamiltonian and reduced Hamiltonian vector fields for $\amap(x,y)=x\,y^2$.}\protect\label{fig:partit}
\end{figure}
\end{center}

Since $\amap$ is constant along orbits of $\Fld$ and $\hFld$, it follows that
\begin{equation}\label{equ:Did_FG_Stabf}
\DF \;\subset\; \DhF \;\subset\; \Stabf.
\end{equation}
Also notice that $\DF$ consists of those $\dif\in\DhF$ which fixes every critical point of $\amap$.

\begin{lemma}\label{lm:DhF_Sf}
$\DidhFld{r}=\StabIdf{r}$ for all $r\in\NNi$.
\end{lemma}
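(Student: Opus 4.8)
The plan is to prove the two inclusions $\DidhFld{r}\subseteq\StabIdf{r}$ and $\StabIdf{r}\subseteq\DidhFld{r}$ separately. The first is purely formal: by~\eqref{equ:Did_FG_Stabf} we have $\DhF\subseteq\Stabf$, and $\DhF$ carries the subspace $C^{r}_{W}$-topology, so any $r$-isotopy in $\DhF$ joining $\id_{\RRR^2}$ to $\dif$ is in particular an $r$-isotopy in $\Stabf$; hence $\DidhFld{r}\subseteq\StabIdf{r}$. The substance of the lemma is the reverse inclusion, for which I would prove the stronger statement that \emph{every} $r$-isotopy $\homot:\RRR^2\times I\to\RRR^2$ in $\Stabf$ with $\homot_0=\id_{\RRR^2}$ already takes values in $\DhF$, i.e.\ each $\homot_t$ preserves every orbit of $\hFld$. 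Since each $\homot_t$ is a global diffeomorphism of $\RRR^2$, it is automatically a local diffeomorphism at the singular point, so the only condition to verify is that $\homot_t$ maps each element of $\partithFld$ onto itself; granting this, $\homot$ is an $r$-isotopy in $\DhF$ and $\dif=\homot_1\in\DidhFld{r}$.

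I would establish the orbit-preserving property by tracking points along the isotopy. Fix an orbit $\omega\in\partithFld$ with $\amap\equiv c$ on $\omega$ and a point $a\in\omega$; since $\homot_t\in\Stabf$, the curve $t\mapsto\homot_t(a)$ is a continuous path lying in the single level set $\amap^{-1}(c)$ and starting at $a$. For $c\ne0$ this is easy: by the Euler identity $p\cdot\amap=\sum_i x_i\,\amap'_{x_i}$ every critical point of $\amap$ lies in $\amap^{-1}(0)$, so $\amap^{-1}(c)$ is a smooth $1$-manifold whose connected components are exactly the orbits of $\hFld$, as the descriptions of $\struchFld$ in cases (A)--(E) record. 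A continuous path stays in one component, so $\homot_t(a)\in\omega$; since $\homot_t$ restricts to a homeomorphism of $\amap^{-1}(c)$ permuting its components, $\homot_t(\omega)$ is a component meeting $\omega$, whence $\homot_t(\omega)=\omega$.

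The remaining case is the level $c=0$, which is where I expect the main obstacle. In cases (C) and (D) one has $\amap^{-1}(0)=\{0\}$, so $\homot_t(0)=0$ automatically and the singular orbit $\{0\}$ is preserved; in case (A) the whole line $\amap^{-1}(0)$ is a single non-singular orbit, so nothing must be separated. The genuine difficulty is in cases (B) and (E), where $\amap^{-1}(0)\setminus0$ splits into the $2l$ open rays forming the non-singular level-$0$ orbits while $\amap^{-1}(0)$ is connected through the origin; to stop $t\mapsto\homot_t(a)$ from sliding between rays through $0$ I must show $\homot_t(0)=0$. The clean way is to use that the order of vanishing $\mathrm{ord}_z\amap$ is a diffeomorphism invariant, so $\amap\circ\homot_t=\amap$ forces $\mathrm{ord}_{\homot_t(z)}\amap=\mathrm{ord}_z\amap$ for all $z$. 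As $\amap$ is homogeneous of degree $p$ it vanishes to order $p$ at $0$, and a point $z\ne0$ with $\mathrm{ord}_z\amap=p$ would make $\amap$ invariant under translation by $z$, forcing $\amap=c\,\ell^{p}$ for a linear form $\ell$, i.e.\ exactly case (A). Thus in cases (B)--(E) the origin is the unique point of maximal vanishing order and is fixed by every element of $\Stabf$. Consequently $\homot_t$ fixes $0$, maps $\amap^{-1}(0)\setminus0$ homeomorphically onto itself, so the path $\homot_t(a)$ avoids $0$ and stays in the ray $\omega$; the same component-permutation argument gives $\homot_t(\omega)=\omega$, and $\homot_t$ trivially fixes $\{0\}$. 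This settles that each $\homot_t\in\DhF$ and completes the reverse inclusion. The crux is isolating the right diffeomorphism invariant (vanishing order) that pins down the origin in the awkward subcase $l=1$ of (E), where $\amap^{-1}(0)$ is a smooth line and the origin is not distinguished topologically.
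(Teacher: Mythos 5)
Your proof is correct and follows essentially the same route as the paper: the formal inclusion $\DidhFld{r}\subseteq\StabIdf{r}$, then showing every $r$-isotopy in $\Stabf$ starting at $\id_{\RRR^2}$ stays in $\DhF$ by a continuity/connected-component argument, pinning down $\homot_t(0)=0$ in cases (B) and (E) via the diffeomorphism-invariance of the vanishing order of $\amap$ (the paper's $p_z$). The only difference is that you actually prove the origin is the unique point of maximal vanishing order (via the translation-invariance argument), a claim the paper merely asserts as easy to see.
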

\begin{proof}
It follows from~\eqref{equ:Did_FG_Stabf} that $\DidhFld{r}\subset\StabIdf{r}$.

Conversely,  let $\dif\in\StabIdf{r}$, so there exists an $r$-isotopy $\dif_t:\RRR^2\to\RRR^2$ between $\dif_0=\id_{\RRR^2}$ and $\dif_1=\dif$ in $\Stabf$, i.e. 
\begin{equation}\label{equ:fht_f}
\amap\circ\dif_t=\amap, \qquad t\in I.
\end{equation}
We claim that every $\dif_t\in\DhF$, i.e. $\dif_t(\omega)=\omega$ for every element $\omega$ of $\struchFld$.
This will mean that $\{\dif_t\}$ is an $r$-isotopy in $\DhF$, whence $\dif\in\DidhFld{r}$.

It follows from~\eqref{equ:fht_f} that $\dif_t(\amap^{-1}(c))=\amap^{-1}(c)$ for every $c\in\RRR$ and $\dif_t(\singf)=\singf$.
Since $\dif_0=\id_{\RRR^2}$ preserves every connected component $\omega$ of $\amap^{-1}(c)\setminus\singf$, so does $\dif_t$, $t\in I$.
If either $c\not=0$, or $c=0$ but $\amap$ is of either the cases (A), (C), or  (D), then by definition every such $\omega$ is an element of $\struchFld$.

Let $c=0$.
We claim that in the cases (B) and (E) $\dif_t(0)=0$ for all $t\in I$.
Indeed, in these cases the origin is ``the most degenerate point among all other points of $\amap^{-1}(0)$''.
This means the following.

For every $z\in\amap^{-1}(0)$ denote by $p_{z}$ the least number such that $p_{z}$-jet of $\amap$ at $z$ does not vanish, i.e. $j^{p_{z}-1}(\amap,z)=0$ while $j^{p_{z}}(\amap,z)\not=0$.
In other words, the Taylor series of $\amap$ at $z$ starts with terms of order $p_z$.
It is easy to see that for the origin $p_0=\deg\amap$, while for all other points $z\in\amap^{-1}(0)$ we have that $p_z<\deg\amap$.
Also notice that this number $p_z$ is preserved by any diffeomorphism $\dif\in\Stabf$, i.e. $p_{\dif(z)}=p_{z}$.
It follows that $\dif_t(0)=0$.

It remains to note that by continuity every $\dif_t$ preserves connected components of $D^{-1}(0)\setminus\{0\}$.
Hence $\dif_t\in\DhF$ for all $t\in I$.
\end{proof}

Now we can complete Theorem~\ref{th:Stabf1_not_Stabf0}.
Let $\flow$ be the local flow on $\RRR^2$ generated by $\hFld$, and $\Shift$ be the shift map of $\hFld$, see Section~\ref{sect:shift-map}.
The following statement was established in~\cite{Maks:Shifts}.
\begin{lemma}
In the cases {\rm(A)-(C)} (i.e. when $\deg\hFld\leq1$) for every $\dif\in\EidhFld{0}$ there exists a smooth function $\sigma:\RRR^2\to\RRR$ such that $\dif(z)=\flow(x,\sigma(x))$, i.e. $\im(\Shift)=\EidhFld{0}$.
\end{lemma}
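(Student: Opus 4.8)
The plan is to split the claimed equality into its two inclusions and treat them asymmetrically. The inclusion $\im(\Shift)\subset\EidhFld{0}$ is immediate: it is Lemma~\ref{lm:imShift_EidVFinfty} applied to $\hFld$ on the whole plane, giving $\im(\Shift)\subset\EidhFld{\infty}\subset\EidhFld{0}$ with the explicit homotopy $\dif_t(z)=\flow(z,t\sigma(z))$. All the content therefore lies in the reverse inclusion $\EidhFld{0}\subset\im(\Shift)$, namely in showing that an orbit-preserving endomorphism homotopic to the identity is the time-$\sigma$ shift of $\hFld$ for some smooth $\sigma$. The guiding idea is to manufacture $\sigma$ by lifting a connecting homotopy through the flow.

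Concretely I would fix $\dif\in\EidhFld{0}$ and a $0$-homotopy $\{\dif_t\}$ in $\End(\struchFld)$ with $\dif_0=\id$ and $\dif_1=\dif$. For each $z$ the path $t\mapsto\dif_t(z)$ lies in the single orbit $\omega_z$ through $z$ and begins at $z=\flow(z,0)$. Since $\hFld$ is constant (case (A)) or linear (cases (B),(C)), its flow is complete and, on every non-singular orbit, the orbit map $t\mapsto\flow(z,t)$ is a covering of $\omega_z$ by $\RRR$; path-lifting then produces, for every non-fixed $z$, a unique continuous $\tau(z,t)$ with $\tau(z,0)=0$ and $\flow(z,\tau(z,t))=\dif_t(z)$, and I set $\sigma(z)=\tau(z,1)$, so that $\dif(z)=\flow(z,\sigma(z))$ there. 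In cases (A) and (B) this covering is in fact a diffeomorphism onto each non-singular orbit --- a line in (A), a half-axis or hyperbola branch in (B) --- so $\sigma$ is pinned down pointwise by $\dif$ alone and the homotopy is not truly needed for its existence. The genuine use of $\dif\in\EidhFld{0}$ is in the periodic case (C), where the orbits are concentric circles and the pointwise time is defined only modulo the common period: here the connecting homotopy resolves this ambiguity and yields a single-valued real lift $\sigma$, whereas an element of $\EhF$ admitting no such homotopy may wind nontrivially around the circles and be no shift at all. This is exactly why the statement concerns $\EidhFld{0}$ rather than $\EhF$.

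The step I expect to be the crux is the smoothness of $\sigma$, and in particular its smooth extension across the singular set of $\hFld$, which in each case is contained in the single fixed point $0$ (and is empty in (A), where $\hFld$ never vanishes and $\sigma$ is manifestly smooth). On $\RRR^2\setminus\{0\}$ the field $\hFld$ is nonvanishing, the flow supplies a local smooth time-coordinate, and $\sigma$ is smooth by the implicit function theorem; the whole issue is the point $0$, where the orbit collapses and $\dif$ merely fixes the point. Here I would first observe that $\dif$ preserves every level set of $\amap$ (orbits lie in level sets), so $\amap\circ\dif=\amap$; passing to $1$-jets at $0$ then forces the linearization $d\dif_0$ onto the one-parameter group $\{e^{At}\}$ generated by $\hFld$ (with $A$ the matrix of this linear field), which already pins the candidate value $\sigma(0)$. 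The real task is to upgrade continuity to \emph{smoothness} of $\sigma$ at $0$, which I would do by writing $\flow(z,t)=e^{At}z$ explicitly and solving $\dif(z)=e^{A\sigma(z)}z$ for $\sigma$ --- in (C), in polar coordinates, as the angular displacement $\dif$ induces on each circle divided by the constant angular speed --- and verifying the resulting quotient is smooth precisely because $\dif$ is. This local linear computation is the statement already recorded for linear fields in Example~\ref{exmp:lin-v-f} via~\cite{Maks:Shifts}; in practice I would invoke it on a neighbourhood of $0$ and glue to the explicit global formula for $\sigma$ on $\RRR^2\setminus\{0\}$, and it is this gluing and smoothness analysis at the singular point where the main difficulty resides.
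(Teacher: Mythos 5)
Your argument is correct in outline, but the comparison with the paper is unusual: the paper offers no proof of this lemma at all --- it is quoted directly from \cite{Maks:Shifts}. Your proof bottoms out in the same source, but only through its local form, Example~\ref{exmp:lin-v-f} (which records $\im(\Shift_{\van})=\EidVFld{0}$ for a linear vector field and any neighbourhood $\van$ of the origin), and you supply the rest yourself: the easy inclusion via Lemma~\ref{lm:imShift_EidVFinfty}, the path-lifting definition of $\sigma$, flow-box smoothness on $\RRR^2\setminus\{0\}$, and the gluing. That is legitimate, not circular, and it correctly isolates where $\dif\in\EidhFld{0}$ rather than merely $\dif\in\EhF$ is needed: only in case (C), to select a single-valued branch of the time function on the circle orbits. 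What you seem not to have noticed is that your reduction is unnecessary: in cases (B) and (C) the reduced field $\hFld$ is itself a \emph{linear} vector field on all of $\RRR^2$, and $\van=\RRR^2$ is a perfectly good neighbourhood of the origin, so Example~\ref{exmp:lin-v-f} applied with $\van=\RRR^2$ already \emph{is} the statement of the lemma in those cases, while case (A) (constant field) is a one-line explicit computation; the covering and gluing scaffolding can be discarded. If you do keep your local-to-global version, two details must be written out: (i) continuity of $\sigma(z)=\tau(z,1)$ in $z$, which requires lifting the map $(z,t)\mapsto(z,\dif_t(z))$ through the covering $(z,t)\mapsto(z,\flow(z,t))$ of the space of pairs of points on a common orbit, not just lifting path-by-path; and (ii) in case (C), that your global $\sigma$ and the local function supplied by Example~\ref{exmp:lin-v-f} differ on a punctured neighbourhood of $0$ by a continuous, hence constant, integer multiple of the common period of the circle orbits, so the local function can be corrected to agree with $\sigma$ before gluing.
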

It follows from Lemmas~\ref{lm:imShift_EidVFinfty} and~\ref{lm:DhF_Sf} that in the cases (A)-(C)
\begin{equation}\label{equ:Stabinf_Stab0}
\StabIdf{\infty} = \DidhFld{\infty}=\cdots= \DidhFld{0} =\StabIdf{0}.
\end{equation}

The following lemma is a consequence of results of~\cite{Maks:jets,Maks:hamv2}.
\begin{lemma}\label{lm:imShift_DE}
In the cases {\rm(D)} and {\rm(E)} (i.e. when $\deg\hFld\geq2$) $\im(\Shift)$ consists of all $\dif\in\EhF$ whose tangent map $T_0\dif:T_0\RRR^2\to T_0\RRR^2$ at $0$ is the identity. 
\end{lemma}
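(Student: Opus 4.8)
The plan is to prove the two inclusions separately, writing $\Shift$ for the shift-map $\Shift_{\RRR^2}$ of $\hFld$. The inclusion $\im(\Shift)\subseteq\{\dif\in\EhF : T_0\dif=\id\}$ is the easy half. By Lemma~\ref{lm:imShift_EidVFinfty} applied to $\hFld$ we already have $\im(\Shift)\subset\EidhFld{\infty}\subset\EhF$, so only the tangent condition needs checking. Let $\dif=\Shift(\afunc)$, i.e.\ $\dif(z)=\flow(z,\afunc(z))$. Since $\hFld(0)=0$, the origin is a fixed point of the flow, $\flow(0,t)=0$, so $\dif(0)=0$ and, differentiating at $z=0$,
$$T_0\dif(v)=\partial_z\flow(0,\afunc(0))(v)+\partial_t\flow(0,\afunc(0))\,d\afunc_0(v).$$
The second summand vanishes because $\partial_t\flow(0,\afunc(0))=\hFld(0)=0$, while $\partial_z\flow(0,\afunc(0))=e^{\afunc(0)\,D_0\hFld}$ is the linearized time-$\afunc(0)$ map at the fixed point. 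As $\deg\hFld=l+2k-1\geq2$ in cases (D) and (E), the coordinate functions of $\hFld$ are homogeneous of degree $\geq2$, hence $D_0\hFld=0$ and $T_0\dif=\id$.

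For the reverse inclusion, which is the substantial part, I would invoke the shift-map theory of~\cite{Maks:jets,Maks:hamv2}. Given $\dif\in\EhF$ with $T_0\dif=\id$, the goal is to construct $\afunc\in C^{\infty}(\RRR^2,\RRR)$ with $\Shift(\afunc)=\dif$. Because $\dif$ preserves every orbit of $\hFld$ and is a local diffeomorphism at the only singular element $0$, on each non-singular orbit the points $z$ and $\dif(z)$ lie on one trajectory, so I would first define $\afunc(z)$ as a time satisfying $\flow(z,\afunc(z))=\dif(z)$: uniquely on the union of non-periodic orbits, where it depends smoothly on $z$; then on the periodic orbits, if present (the compact components among the level sets of $\amap$, which exhaust the plane in case (D)), by selecting a branch modulo the period function that varies smoothly across the family; and finally I would extend $\afunc$ smoothly across $0$. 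The theory of~\cite{Maks:jets,Maks:hamv2} shows that for a vector field whose coordinate functions are relatively prime homogeneous polynomials with an isolated zero at the origin, such an $\afunc$ exists exactly when the $1$-jet of $\dif$ at $0$ coincides with that of a shift, and this $1$-jet condition is precisely $T_0\dif=\id$.

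The main obstacle is the last step, the smooth extension of $\afunc$ across the degenerate fixed point $0$ and its compatibility with the branch chosen on the surrounding periodic orbits. Near $0$ the flow moves slowly, since $\hFld$ vanishes to order $\geq2$, so a priori $\afunc$ could fail to extend; the jet computations of~\cite{Maks:jets} identify the obstruction to smoothness at $0$ with the deviation of the $1$-jet of $\dif$ from the identity, and~\cite{Maks:hamv2} assembles the local and global data into a single smooth $\afunc$. I would also record the conceptual reason the relevant condition is one on the $1$-jet alone: any $\dif\in\EhF$ fixing $0$ preserves every level set of $\amap$, hence $\amap\circ\dif=\amap$, so $T_0\dif$ is an orientation-preserving linear symmetry of $\amap$, i.e.\ $T_0\dif\in\LStabf$, which by Lemma~\ref{lm:LStabf} is finite cyclic in cases (D) and (E); the elements of $\im(\Shift)$ are then exactly those realizing the trivial symmetry $T_0\dif=\id$.
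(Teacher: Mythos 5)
Your proof is correct and takes essentially the same route as the paper: the paper offers no argument of its own for this lemma, deriving it directly from the results of \cite{Maks:jets,Maks:hamv2}, which is precisely what you do for the substantial inclusion $\{\dif\in\EhF \,:\, T_0\dif=\id\}\subset\im(\Shift)$. Your explicit check of the easy inclusion (using that $D_0\hFld=0$ because the coordinate functions of $\hFld$ are homogeneous of degree $\deg\hFld\geq2$) is a correct elementary supplement that the paper leaves implicit.
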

\begin{corollary}\label{cor:imShift_DE}
$\EidhFld{1}\subset\im(\Shift)$, whence similarly to~\eqref{equ:Stabinf_Stab0} we get $\StabIdf{\infty} = \cdots =\StabIdf{1}$.
\end{corollary}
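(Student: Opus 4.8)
The plan is to deduce the inclusion $\EidhFld{1}\subset\im(\Shift)$ from the description of $\im(\Shift)$ furnished by Lemma~\ref{lm:imShift_DE}, and then to feed it into Lemma~\ref{lm:imShift_EidVFinfty} exactly as was done for the cases (A)--(C) in~\eqref{equ:Stabinf_Stab0}. Since Lemma~\ref{lm:imShift_DE} identifies $\im(\Shift)$ with the set of all $\dif\in\EhF$ satisfying $T_0\dif=\id$, the only thing I really need to check is that every $\dif\in\EidhFld{1}$ has identity tangent map at the origin. Once this is established we will have $\DidhFld{1}\subset\EidhFld{1}\subset\im(\Shift)$, so Lemma~\ref{lm:imShift_EidVFinfty} applied with $r=1$ gives $\DidhFld{\infty}=\cdots=\DidhFld{1}$, and Lemma~\ref{lm:DhF_Sf} translates this into $\StabIdf{\infty}=\cdots=\StabIdf{1}$.

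To show $T_0\dif=\id$, I would fix $\dif\in\EidhFld{1}$ and choose a $1$-homotopy $\homot:\RRR^2\times I\to\RRR^2$ with $\homot_0=\id_{\RRR^2}$, $\homot_1=\dif$ and $\homot_t\in\EhF$ for every $t$. In the cases (D) and (E) the origin is the \emph{unique} singular element of $\struchFld$, so the partition-preserving property forces $\homot_t(0)=0$ for all $t$. Moreover, as each $\homot_t$ maps every orbit of $\hFld$ into itself it preserves the level sets of $\amap$, whence $\amap\circ\homot_t=\amap$. Writing $A_t=T_0\homot_t$ and using that $\homot_t$ is $C^{1}$ with $\homot_t(0)=0$, a linearization at the origin together with the homogeneity of $\amap$ should show that $A_t$ is a genuine linear symmetry, i.e. $\amap\circ A_t=\amap$. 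Since $A_0=\id$ and each $A_t$ is invertible (because $\homot_t$ is a local diffeomorphism at the singular point $0$), continuity of $\det A_t$ keeps it positive, so $A_t\in\LStabf=\Stabfpl\cap\GLPR$.

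The concluding step would use the rigidity of $\LStabf$. By the $1$-homotopy condition the first-order $x$-derivatives of $\homot$ depend continuously on $(x,t)$, so $t\mapsto A_t$ is a continuous path in $\LStabf$ starting at $\id$. In the cases (D) and (E) Lemma~\ref{lm:LStabf} asserts that $\LStabf$ is a finite cyclic group, and a continuous path into a discrete group is constant; hence $A_t\equiv\id$ and in particular $T_0\dif=A_1=\id$. This is exactly the point where the $C^{1}$ hypothesis is indispensable: the linearization argument needs genuine control of the first derivatives along the homotopy, which a mere $0$-homotopy does not provide. I therefore expect this linearization-plus-discreteness step to be the main obstacle, and it is precisely its failure at the $C^{0}$ level that will later account for $\StabIdf{1}\neq\StabIdf{0}$ in case (D).
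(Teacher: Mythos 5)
Your proposal is correct and follows essentially the same route as the paper: both arguments show that $T_0\dif_t\in\LStabf$ for every $t$, invoke the discreteness of $\LStabf$ in the cases (D) and (E) (Lemma~\ref{lm:LStabf}) to force $T_0\dif=\id_{\RRR^2}$, and then combine Lemma~\ref{lm:imShift_DE} with Lemmas~\ref{lm:imShift_EidVFinfty} and~\ref{lm:DhF_Sf} to obtain $\StabIdf{\infty}=\cdots=\StabIdf{1}$. The only difference is presentational: where you sketch a linearization argument for $\amap\circ T_0\dif_t=\amap$, the paper cites \cite[Lemma~36]{Maks:Shifts}; your sketch is sound, since for homogeneous $\amap$ of degree $p$ one has $\amap(T_0\dif_t(x))=\lim_{s\to0^{+}}\amap\bigl(\dif_t(sx)\bigr)/s^{p}=\lim_{s\to0^{+}}\amap(sx)/s^{p}=\amap(x)$.
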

\begin{proof}[Proof of Corollary.]
Let $\dif\in\EidhFld{1}$.
Then there exists a $1$-homo\-topy $\dif_t$ between $\dif_0=\id_{\RRR^2}$ and $\dif_1=\dif$ in $\EhF$.
In particular, $T_0\dif_t$ is continuous in $t$.

Since $\amap$ is homogeneous, it follows from~\cite[Lemma~36]{Maks:Shifts}, that $T_0\dif_t$ regarded as a linear automorphism of $\RRR^2$ also must preserve $\amap$, i.e. $T_0\dif_t\in\LStabf$.
Therefore the family of maps $T_0\dif_t$ can be regarded as a homotopy in $\LStabf$.
But by Lemma~\ref{lm:LStabf} in the cases (D) and (E) the group $\LStabf$ is discrete, whence all $T_0\dif_t$ coincide with the identity map $T_0\dif_0=\id_{\RRR^2}$.
In particular, $T_0\dif=\id_{\RRR^2}$, whence by Lemma~\ref{lm:imShift_DE} $\dif\in\im(\Shift)$.
\end{proof}

It remains to show that $\StabIdf{1}$ and $\StabIdf{0}$ coincide in the case (E) and are distinct in the case (D).

(D) Let $\amap$ be a product of at least two distinct definite quadratic forms.
Then by Theorem~\ref{th:Stabf-Bn} $\StabIdf{0}=\Stabfpl$.
Moreover, since $\deg\amap$ is even, we have that $-\id_{\RRR^2} \in \Stabfpl=\StabIdf{0}$, see Lemma~\ref{lm:LStabf_nontriv}.
On the other hand by Lemma~\ref{lm:imShift_DE} for every $\dif\in\StabIdf{1}$ its tangent map $T_0\dif=\id_{\RRR^2}  \not= -\id_{\RRR^2}$.
Hence $\StabIdf{1}\not=\StabIdf{0}$.

(E) In this case $\amap^{-1}(0)$ is a union of $l\geq1$ straight lines $\{L_i=0\}$ passing through the origin.
Let $\dif\in\DidhFld{0}$.
Since there exists a homotopy between $\dif$ and $\id_{\RRR^2}$ in $\DidhFld{0}$, it follows that $\dif$ preserves every half-line of $\amap^{-1}(0)\setminus\{0\}$.
Therefore so does $T_0\dif\in\LStabf$.
Then it follows from Claim~\ref{clm:LStabf_Zl} that $T_0\dif=\id_{\RRR^2}$, whence by Lemma~\ref{lm:imShift_DE} $\dif\in\im(\Shift)$.
Thus $\DidhFld{0}\subset\im(\Shift)$.

Now we get from Lemma~\ref{lm:imShift_EidVFinfty} that 
$$\StabIdf{1}=\DidhFld{1}=\DidhFld{0}=\StabIdf{0}.$$
Theorem~\ref{th:Stabf1_not_Stabf0} is completed.

\section{Acknowledgement}
I would like to thank V.\;V.\;Sharko and E.\;Polulyah for useful discussions.
I also thank anonymous referee for careful reading of this manuscript and critical remarks which allow to clarify many points.


\begin{thebibliography}{W:99}
\bibitem{AVG:Sing-1}
V.\;I.\;Arnold, A.\;N.\;Varchenko, S.\;M.\;Gusein-Zade,
\emph{Singularities of differentiable mappings, I.} Nauka, Moscow 1982 (in Russian).


\bibitem{Dancer:JRAM}
E.\;N.\;Dancer,
\emph{Degenerate critical points, homotopy indices and Morse inequalities. II.}
J. Reine Angew. Math. \textbf{382} (1987) 145--164.


\bibitem{Freedman}
M.\;H.\;Freedman 
\emph{The Topology of Four-Differentiable Manifolds}
J. Diff. Geom. \textbf{17} (1982) 357--453.


\bibitem{Hatcher}
A.\;Hatcher,
\emph{A proof of Smale Conjecture $\mathrm{Diff}(S^3) \cong O(4)$},
Ann. of Math., \textbf{117} (1983) 533-607.


\bibitem{Hirsch:DiffTop}
M.\;W.\;Hirsch,
\emph{Differential topology.}
Graduate text in Math., \textbf{33}, Springer-Verlag, 1976.


\bibitem{Huffman}
W.\;C\;Huffman,
\emph{Polynomial invariants of finite linear groups of degree two},
Canad. J. Math., \textbf{32} (1980) 317-330.


\bibitem{Kuratowski:2}
K.\;Kuratowski,
\emph{Topology, II}, Acad. Press, New York and London, 1968.
 

\bibitem{Leslie:Topology}
J.\;A.\;Leslie,
\emph{On a differential structure for the group of diffeomorphisms},
Topology \textbf{6} no.4 (1967) 263--271.


\bibitem{Maks:Shifts}
S.\;Maksymenko,
\emph{Smooth shifts along trajectories of flows},
Topology And its Applications, \textbf{130} (2003) 183--204,
\texttt{http://lanl.arxiv.org/abs/math/0106199}.


\bibitem{Maks:BSM:2006}
S.\;Maksymenko,
\emph{Stabilizers and orbits of smooth functions}, Bulletin des Sciences Math\'ematiques, \textbf{130} (2006) 279-311, 
\texttt{http://xxx.lanl.gov/abs/math/0411612}.


\bibitem{Maks:AGAG:2006}
S.\;Maksymenko,
\emph{Homotopy types of stabilizers and orbits of Morse functions on surfaces}, 
Annals of Global Analysis and Geometry, \textbf{29} no. 3, (2006) 241--285, \texttt{http://xxx.lanl.gov/abs/math/0310067}.


\bibitem{Maks:jets}
S.\;Maksymenko,
\emph{$\infty$-jets of diffeomorphisms preserving orbits of vector fields}, submitted,  \texttt{http://xxx.lanl.gov/abs/math/0708.0737}.


\bibitem{Maks:hamv2}
S.\;Maksymenko,
\emph{Hamiltonian vector fields of homogeneous polynomials on the plane},
Problems of topology and related questions. Proceedings of Institute of Mathematics of NAS of Ukraine, 2006, \textbf{3}, no.3, 269--308, (in Ukrainian), \texttt{http://xxx.lanl.gov/abs/math/0709.2511}.


\bibitem{Milnor:hcob}
J.\;Milnor,
\emph{Lectures on the $h$-cobordism theorem},
Princeton, New Jersey, 1965.


\bibitem{Milnor:MorseTh}
J.\;Milnor,
\emph{Morse theory},
Princeton, New Jersey, 1963.


\bibitem{MilnorWallace:DiffTop}
J.\;Milnor, A.\;Wallace,
\emph{Differential topology},
M. Mir, 1972 (in Russian)


\bibitem{Novikov}
S.\;P.\;Novikov
\emph{Differentiable sphere bundles},
Izv. Acad. Nauk SSSR, \textbf{29} (1965) 71--96.

\bibitem{Perelman1}
G.\;Perelman,
\emph{The entropy formula for the Ricci flow and its geometric application},
11 Nov 2002, \texttt{http://arxiv.org/abs/math.DG/0211159}.

\bibitem{Perelman2}
G.\;Perelman,
\emph{Ricci flow with surgery on three-manifolds}
10 Mar 2003, \texttt{http://arxiv.org/abs/math.DG/0303109}.


\bibitem{Pontryagin}
L.\;S.\;Pontryagin,
\emph{Smooth manifolds and their applications in homotopy theory},
M. Nauka, 1985, 176 pp. (in Russain).


\bibitem{Saito}
K.\;Saito,
\emph{Quasihomogene isolierte Singularitaten von Hyperflashen},
Inv. Math, \textbf{14} (1971) 123--142.

\bibitem{Smale}
S.\;Smale,
\emph{Diffeomorphisms of the $2$-sphere},
Proc. Amer. Math. Soc. \textbf{10} (1959) 621--626.

\bibitem{Smale:PC}
S.\;Smale,
\emph{Generalized Poincar\'e's  conjecture in dimensions greater than four},
Ann. Math. \textbf{74} (1961) 391--406.
\end{thebibliography}
\end{document}